\DeclareMathAlphabet{\pazocal}{OMS}{zplm}{m}{n}
\newtheorem{theorem}{Theorem}[section]
\newtheorem{lemma}[theorem]{Lemma}
\newtheorem{proposition}[theorem]{Proposition}
\theoremstyle{definition}
\newtheorem{definition}[theorem]{Definition}
\newtheorem{conjecture}[theorem]{Conjecture}
\theoremstyle{remark}
\newtheorem{remark}[theorem]{Remark}
\newtheorem{remarks}[theorem]{Remarks}
\numberwithin{equation}{section}
\newcommand{\R}{\ensuremath{\mathbb{R}}}
\newcommand{\N}{\ensuremath{\mathbb{N}}}
\renewcommand{\u}{\ensuremath{\pazocal{U}}}
\newcommand{\us}{\mathbf{U}}
\newcommand{\n}{\ensuremath{\pazocal{N}}}
\newcommand{\I}{\mathbf{I}}
\newcommand{\J}{\mathbf{J}}
\newcommand{\G}{\mathbf{G}}
\newcommand{\ns}{{\mathbf{N}}}
\newcommand{\set}[1]{\left\{#1\right\}}
\newcommand{\la}{\lambda}
\newcommand{\ep}{\varepsilon}
\newcommand{\f}{\infty}
\newcommand{\de}{\delta}
\newcommand{\om}{\omega}
\newcommand{\al}{\alpha}
\newcommand{\lle}{\preccurlyeq}
\newcommand{\lge}{\succcurlyeq}
\newcommand{\si}{\sigma}
\newcommand{\ra}{\rightarrow}
\newcommand{\qtq}[1]{\quad\text{#1}\quad}
\begin{document}
\begin{frontmatter}

\title{Algebraic sums and products of univoque bases}

 \author[K. Dajani]{Karma Dajani}
\address[K. Dajani]{Department of Mathematics, Utrecht University, Fac Wiskunde en informatica and MRI, Budapestlaan 6, P.O. Box 80.000, 3508 TA Utrecht, The Netherlands}
\ead{k.dajani1@uu.nl}

\author[V. Komornik]{Vilmos Komornik}
\address[V. Komornik]{D\'{e}partement de math\'{e}matique,
         Universit\'{e} de Strasbourg,
         7 rue Ren\'{e} Descartes,
         67084 Strasbourg Cedex, France}
\ead{komornik@math.unistra.fr}

\author[D. Kong]{Derong Kong\corref{kong}}
\address[D. Kong]{Mathematical Institute, University of Leiden, PO Box 9512, 2300 RA Leiden, The Netherlands}
\cortext[kong]{Corresponding author}\ead{derongkong@126.com}

\author[W. Li]{Wenxia Li}
\address[W. Li]{Department of Mathematics, Shanghai Key Laboratory of PMMP, East China Normal University, Shanghai 200062,
People's Republic of China}
\ead{wxli@math.ecnu.edu.cn}


%

\begin{abstract}
Given $x\in(0, 1]$, let $\u(x)$ be the set of bases $q\in(1,2]$ for which there exists a unique sequence $(d_i)$ of zeros and ones such that $x=\sum_{i=1}^\f{{d_i}/{q^i}}$.
L\"{u}, Tan and Wu \cite{Lu_Tan_Wu_2014} proved that $\u(x)$ is a Lebesgue null set of full Hausdorff dimension. 
In this paper, we show that the algebraic sum $\u(x)+\la\u(x)$ {and  product $\u(x)\cdot\u(x)^\la$}  contain an interval for all $x\in(0, 1]$ and $\la\ne 0$.   
As an application we show that the same phenomenon occurs for the set of non-matching parameters studied by the first author and Kalle \cite{Daj-Kal-07}.
\end{abstract}
\begin{keyword}
{Algebraic differences\sep non-integer base expansions\sep univoque bases\sep  thickness\sep Cantor sets\sep non-matching parameters.}

\MSC[2010]{28A80,  11A63, 37B10}
\end{keyword}
\end{frontmatter}

\section{Introduction}\label{s1}
Non-integer base expansions, a natural extension of dyadic expansions, have got much  attention since the ground-breaking works of R\'enyi \cite{Renyi_1957} and Parry \cite{Parry_1960}. 
Given a base $q\in(1,2]$, an infinite sequence $(d_i)$ of zeros and ones is called a \emph{$q$-expansion} of $x$ if 
\begin{equation*}
x=\sum_{i=1}^\f\frac{d_i}{q^i}=:((d_i))_q.
\end{equation*}
A number $x$ has a $q$-expansion if and only if $x\in I_q:=[0, \frac{1}{q-1}]$. 
Contrary to the the dyadic expansions, Lebesgue almost every $x\in I_q$ has a continuum of $q$-expansions (see \cite{Sidorov_2003}). 
On the other hand, for each $k\in\N:=\set{1,2,\ldots}$ or $k=\aleph_0$ there exist $q\in(1, 2]$ and $x\in I_q$ such that $x$ has precisely $k$ different $q$-expansions (see \cite{Erdos_Joo_1992}). 
For more information on the non-integer base expansions we refer to the survey paper \cite{Komornik_2011} and the book chapter \cite{deVries-Komornik-2016}.

On the other hand, algebraic differences of Cantor sets and their connections with dynamical systems have been intensively investigated since the  work of  Newhouse \cite{Newhouse-1979}, who  introduced the notion of \emph{thickness} to study whether a given Cantor set $C\subset\R$ has a non-empty intersection with its translations. 
Since $C\cap(C+t)\ne\emptyset$ if and only if $t\in C-C$, where the \emph{algebraic difference} of two sets $A, B\subset\R$ is defined by $A-B:=\set{a-b: a\in A, b\in B}$, {the thickness (see Definition \ref{def:thickness} below)  can  be used to study the algebraic difference of Cantor sets
(cf.~\cite{Astels_2000,Kraft-1992,Kraft-2000}). }

In this paper, we consider the algebraic differences of sets of univoque bases for given real numbers. 
To be more precise, for $x\in(0, 1]$, let $\u(x)$ be the set of bases $q\in(1, 2]$ such that $x$ has a unique $q$-expansion. 
Then each element of $\u(x)$ is called a \emph{univoque base} of $x$. 
L\"u et al.~\cite{Lu_Tan_Wu_2014} proved that $\u(x)$ is a Lebesgue null set of full Hausdorff dimension. 

We will prove the following result for the  \emph{algebraic sum} and \emph{product} of $\u(x)$ defined respectively by
\begin{equation*}
\u(x)+\la\u(x):=\set{p+\la q: p, q\in\u(x)}\qtq{and} \u(x)\cdot\u(x)^\la:=\set{pq^\la: p, q\in\u(x)}.
\end{equation*}

\begin{theorem}\label{t11}
{For every $x\in(0, 1]$ and every $\lambda\ne 0$}  both the {sum} $\u(x)+\la\u(x)$ and product $\u(x)\cdot\u(x)^\la$ contain an interval.
\end{theorem}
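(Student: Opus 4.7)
The natural plan is to reduce Theorem~\ref{t11} to exhibiting, inside $\u(x)$, a Cantor set of sufficiently large thickness, and then to invoke the thickness formalism of Newhouse and Astels. Recall that thickness is invariant under non-degenerate affine maps and that Astels' theorem \cite{Astels_2000} ensures that any two Cantor sets $C_1,C_2\subset\R$ with $\tau(C_1)/(1+\tau(C_1))+\tau(C_2)/(1+\tau(C_2))\geq 1$ have $C_1+C_2$ equal to a closed interval; in particular the conclusion holds whenever $\tau(C_1)=\tau(C_2)\geq 1$. So if, for each $x\in(0,1]$, one can find a Cantor set $K_x\subset\u(x)$ with $\tau(K_x)\geq 1$, then for every $\la\neq 0$ the set $\la K_x$ has the same thickness as $K_x$, and Astels' theorem produces an interval inside $K_x+\la K_x\subseteq\u(x)+\la\u(x)$.

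The main obstacle is therefore the construction of thick Cantor subsets of $\u(x)$. Since L\"u--Tan--Wu \cite{Lu_Tan_Wu_2014} have already shown that $\u(x)$ has full Hausdorff dimension, $\u(x)$ is known to be symbolically rich; the task is to package this richness as an IFS with controlled bridge-to-gap ratios. The plan is to fix a small subinterval $[q_0,q_1]\subset(1,2]$ and a long admissible word $w$ which, for every $q\in[q_0,q_1]$, appears as an initial segment of a $q$-expansion of $x$. One then parametrises a family of bases in $[q_0,q_1]$ by inserting admissible binary alternatives at a sparse increasing sequence of indices $n_1<n_2<\cdots$; taking the differences $n_{k+1}-n_k$ to grow rapidly makes the contraction ratios of the resulting IFS arbitrarily small and comparable, forcing bridge-to-gap ratios at every level to be as large as desired, and hence $\tau(K_x)$ as large as one pleases. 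That each $q$ in the attractor lies in $\u(x)$ follows from standard admissibility criteria for unique $q$-expansions (cf.\ \cite{Komornik_2011,deVries-Komornik-2016}). Carrying this combinatorial construction out uniformly in $x\in(0,1]$ is the technical heart of the proof.

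The statement about the product $\u(x)\cdot\u(x)^\la$ reduces to the additive one through the identity $\log(pq^\la)=\log p+\la\log q$ and the fact that $\log:[1,2]\to[0,\log 2]$ is a $C^\infty$ diffeomorphism with bounded distortion. Elementary estimates show that $\tau(\log K_x)\geq c\,\tau(K_x)$ for some constant $c>0$ depending only on $[1,2]$; by choosing $\tau(K_x)$ sufficiently large in the previous step we may assume $\tau(\log K_x)\geq 1$. Astels' theorem then furnishes an interval $J\subset \log K_x+\la\log K_x\subseteq\log\u(x)+\la\log\u(x)$, and exponentiating produces the interval $e^J\subset\u(x)\cdot\u(x)^\la$, completing the proposed proof.
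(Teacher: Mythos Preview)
Your overall strategy is correct and matches the paper's: reduce to Astels' thickness criterion by exhibiting a Cantor subset $K_x\subset\u(x)$ with $\tau(K_x)\geq 1$, and handle the product by passing to logarithms (the paper does this in Lemma~\ref{l35}; your bounded-distortion estimate $\tau(\log K)\geq c\,\tau(K)$ is a legitimate shortcut in place of the paper's direct computation).

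The gap is in the construction of $K_x$. You propose binary choices only at a \emph{sparse} sequence of positions $n_1<n_2<\cdots$ with $n_{k+1}-n_k\to\f$, claiming that this makes the contraction ratios small and hence the bridge-to-gap ratios large. But small contraction ratios mean exactly the opposite: small bridges and a large gap. If the choice at level $k$ sits at position $n_k$ and the next at $n_{k+1}$, then the parent interval has length $\asymp q^{-n_k}$ while each child has length $\asymp q^{-n_{k+1}}$, so the bridge-to-gap ratio is of order $q^{-(n_{k+1}-n_k)}\to 0$. Your sentence ``contraction ratios arbitrarily small \dots\ forcing bridge-to-gap ratios at every level to be as large as desired'' is therefore self-contradictory, and the construction as described produces Cantor sets of thickness tending to $0$, not to $\f$.

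The paper's construction goes the other way: one allows a choice at \emph{every} position past a fixed prefix $x_1\cdots x_{M+N_j}$, subject only to forbidding the blocks $0^{N_j}$ and $1^{N_j}$ (Lemma~\ref{l21}). With $N_j$ large, the right end of $I_{\om 0}$ is coded by $\om 0(1^{N_j-1}0)^\f$ and the left end of $I_{\om 1}$ by $\om 1(0^{N_j-1}1)^\f$; a direct computation (Lemma~\ref{l34}, in particular \eqref{38}) gives a gap of size $O(q^{-(n+N_j)})$ while the bridges are bounded below by a constant times $q^{-n}$ (see \eqref{34} and \eqref{37}), so the thickness is of order $q^{N_j}\to\f$. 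The uniformity in $x$ that you flag as the technical heart is supplied by the case analysis in Lemma~\ref{l21}, drawn from \cite{Lu_Tan_Wu_2014}, which fixes the correct prefix and certifies membership in $\u(x)$.
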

{We mention that the product $\u(x)\cdot\u(x)^\la$ in Theorem \ref{t11} can be  converted to a sum by taking the logarithm and then repeating the construction (see Section \ref{s3} for more details). Hence, we will focus more on the algebraic sum $\u(x)+\la\u(x)$.}    

\begin{remarks}\label{r12} \mbox{}

\begin{itemize}
\item For $\la=-1$  Theorem \ref{t11} states that the algebraic difference $\u(x)-\u(x)$ {and quotient $\u(x)\cdot\u(x)^{-1}$} contain an interval for each $x\in(0, 1]$.

\item For $x=1$ the set $\u:=\u(1)$ is well-studied. 
For example, it has a smallest element $q_{KL}\approx 1.78723$, called the \emph{Komornik-Loreti constant}  (see \cite{Kom-Lor-1998}), and its closure $\overline{\u}$ is a Cantor set (see \cite{Komornik_Loreti_2007}). 
Furthermore, the local Hausdorff dimension of $\u$ is positive (see \cite{Kong_Li_Lv_Vries2016}), i.e., $\dim_H(\u\cap(q-\de, q+\de))>0$ for any $q\in\u$ and $\de>0$. 
Theorem \ref{t11} for $x=1$ and $\la=-1$ states that  the algebraic difference $\u-\u$ {and quotient $\u\cdot\u^{-1}$} contain an interval. 

\item  {The algebraic sum $\u(x)+\la\u(x)$ containing an interval for all $\la\ne 0$}  can also be expressed by saying that for each $x\in(0, 1]$ and for each oblique straight line $L$ passing through $0$, the projection of the product set $\u(x)\times \u(x)=\set{(p, q): p, q\in\u(x)}$ onto $L$ contains an interval for all $x\in(0, 1]$. 
\end{itemize}
\end{remarks}

{We will also show that the same phenomenon occurs for the set of non-matching parameters, recently studied by the first author and Kalle \cite{Daj-Kal-07}. 
Let us introduce for each $\al\in[1,2]$ the map $S_\al: [-1,1]\ra[-1,1]$ by the formula
\begin{equation*}
S_\al(x)=\left\{\begin{array}{lll}
2x+\al,&\qtq{if} & -1\le x<\frac{1}{2},\\
2x,&\qtq{if}& -\frac{1}{2}\le x\le \frac{1}{2},\\
2x-\al,&\qtq{if}&\frac{1}{2}<x\le 1.
\end{array}\right.
\end{equation*}
The parameter $\al$ is called a \emph{matching parameter} if there exists $m\in\N$ such that $S_\al^m(1)=S_\al^m(1-\al)$, and a  \emph{non-matching parameter} otherwise.

If $\al$ is a matching parameter, then the density $h_\al$ of the invariant measure with respect to $S_\al$ is simply a finite sum of indicator functions. 

It was shown in \cite{Daj-Kal-07} that the set $\n$ of all non-matching parameters is a Lebesgue null set of full Hausdorff dimension. 
We prove the following result:

\begin{theorem}\label{t41}
{For every $\la\ne 0$} both the algebraic sum $\n+\la \n$ and product $\n\cdot \n^\la$ contain an interval. 
\end{theorem}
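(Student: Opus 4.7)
The plan is to reduce Theorem~\ref{t41} to Theorem~\ref{t11} by exhibiting a locally bi-Lipschitz correspondence between $\n$ and (a subset of) the univoque set $\u(x_0)$ for a suitable $x_0\in(0,1]$. Given such a correspondence, an interval contained in $\u(x_0)+\la\u(x_0)$ transfers to an interval contained in $\n+\la'\n$ for a corresponding slope $\la'$. By varying the cylinder on which the correspondence is defined, $\la'$ can be made to sweep through an open neighbourhood of any prescribed $\la\ne 0$, and the statement for the algebraic product follows from the one for the algebraic sum by the logarithm trick described after Theorem~\ref{t11}.

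For the construction, I would code $\al\in[1,2]$ by the itinerary $(\ep_n(\al))_{n\ge 0}$ of the orbit of $1$ under $S_\al$, where $\ep_n(\al)\in\{-1,0,+1\}$ records which of the three branches contains $S_\al^n(1)$. Iterating the recurrence $S_\al^{n+1}(1)=2\,S_\al^n(1)-\ep_n(\al)\al$ yields the identity $1/\al=\sum_{n\ge 0}\ep_n(\al)/2^{n+1}$ (valid whenever the orbit remains bounded), exhibiting $\al$ as a smooth, monotone function of its kneading sequence. As shown in \cite{Daj-Kal-07}, the matching condition $S_\al^m(1)=S_\al^m(1-\al)$ translates into an eventual coincidence of this kneading sequence with a reference sequence coming from $1-\al$; consequently, non-matching parameters are characterised by a combinatorial avoidance condition closely analogous to the one defining $\u(x)$. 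After a monotone, piecewise analytic change of variable $\al\mapsto q$, one expects the set $\n$ to be identified with (a subset of) $\u(x_0)$ for a specific $x_0$ readable from the construction.

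The main obstacle is to upgrade this change of variable from merely continuous and monotone to bi-Lipschitz on each cylinder of parameters sharing a fixed initial kneading word. The difficulty is that the parameter $\al$ enters \emph{both} the value being expanded and the digit alphabet (through the branch points of $S_\al$), so the representing series has a non-standard form and one must linearise it carefully while controlling the derivative uniformly on each cylinder. Once this is done, the transfer argument above combined with Theorem~\ref{t11} yields the conclusion.
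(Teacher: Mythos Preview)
Your proposal is a sketch rather than a proof, and it both takes a harder route than necessary and leaves the central step unresolved. The paper does \emph{not} reduce Theorem~\ref{t41} to Theorem~\ref{t11}; instead it observes (Lemma~\ref{l42}) that $\al\in\n$ is characterised by the \emph{dyadic} expansion $(a_i)$ of $1/\al$, so that $\n$ is the image of a symbolic set $\ns$ under the smooth map $\Psi:(a_i)\mapsto 1/((a_i))_2$. One then builds, exactly as in Section~\ref{s2}, symbolic Cantor subsets $\ns_m\subset\ns$ (sequences beginning with $1^m$ and containing no block $0^m$ or $1^m$ thereafter) and computes the thickness of $\n_m=\Psi(\ns_m)$ directly. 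Because the projection is in the \emph{fixed} base $2$ composed with $t\mapsto 1/t$, the length estimates for the basic intervals and gaps are elementary, and $\tau(\n_m)\ge 1$ for large $m$ follows in a few lines; Lemmas~\ref{l31} and~\ref{l35} then give the conclusion for every $\la\ne 0$ simultaneously.

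Your plan, by contrast, tries to match $\n$ with $\u(x_0)$ for some $x_0$ via a change of variable $\al\mapsto q$. There are two concrete problems. First, the coding for $\u(x_0)$ is through the map $\Phi_{x_0}:q\mapsto$ quasi-greedy $q$-expansion of $x_0$, which involves a \emph{varying} base, whereas the natural coding of $\n$ you yourself write down is in the fixed base $2$; the composite $\Psi\circ\Phi_{x_0}^{-1}$ is therefore not a smooth (or even obviously bi-Lipschitz) function of $q$, and you have not shown the symbolic sets coincide for any $x_0$. The ``main obstacle'' you flag is real, and you do not resolve it. Second, even a bi-Lipschitz bijection $f$ does not send an interval in $A+\la A$ to an interval in $f(A)+\la' f(A)$: algebraic sums are not preserved under nonlinear maps. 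To make the transfer work you would have to argue at the level of thickness (which is only approximately preserved by $C^1$ maps on small intervals) and then re-apply the Newhouse/Astels lemma --- at which point you might as well compute the thickness of a Cantor subset of $\n$ directly, as the paper does. The ``sweep through slopes by varying the cylinder'' step is also unsupported: you give no mechanism showing that $\la'$, as a function of the cylinder and of $\la$, actually covers a neighbourhood of an arbitrary nonzero value.
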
}

The paper is organized as follows. 
In Section \ref{s2} we investigate the topological structure of $\u(x)$ and we construct a Cantor subset of $\u(x)$ in a symbolic way. 
In Section \ref{s3}, we prove Theorem \ref{t11}  by using a theorem of Newhouse on the thickness, and its recent improvements by Astels \cite{Astels_2000} (see Lemmas \ref{l31} and \ref{l35} below). 
{Section \ref{s4} is devoted to the proof of Theorem \ref{t41}}.
In the final section we prove that neither the algebraic sum $\u(1)+\u(1)$, nor the product $\u(1)\cdot\u(1)$ is an interval, and we conjecture that both the algebraic difference $\u(1)-\u(1)$ and quotient ${\u(1)}\cdot{\u(1)}^{-1}$ are intervals. 

\section{Topological structure of $\u(x)$}\label{s2}

Given $x\in(0, 1]$, let $\Phi_x$ be the  coding map defined by 
\begin{equation}\label{21}
\begin{split}
\Phi_x:~ (1,2]&\ra  \set{0, 1}^\N;\qquad q \mapsto  (a_i),
\end{split}
\end{equation}
where $(a_i)$ is the \emph{quasi-greedy} $q$-expansion of $x$, i.e., the lexicographically largest $q$-expansion of $x$ not ending with $0^\f$. 
In this paper, we will use lexicographical order $\prec, \lle, \succ$ and $\lge$ between sequences in $\set{0, 1}^\N$  defined in the natural  way. 
The definitions imply that $\Phi_x$ is strictly increasing with respect to this lexicographical order.
{Therefore,   we may define intervals  in terms of their codings via $\Phi_x$. 
For example, the \emph{symbolic  interval}  $[(a_i), (b_i)]$ with $(a_i), (b_i)\in\set{0,1}^\N$ corresponds to  the  closed interval $[p, q]\subset(1,2]$, where   $p=\Phi_x^{-1}((a_i))$ and $q=\Phi_x^{-1}((b_i))$. 
We emphasize that not every sequence in $[(a_i), (b_i)]$ corresponds to a base in $[p, q]$. 
In other words,   $\Phi_x([p, q])$ is a proper subset of $[(a_i), (b_i)]$. }

Set
\begin{equation*}
\us(x):= \set{\Phi_x(q): q\in\u(x)}.
\end{equation*}
Then $\Phi_x$ is a bijection between $\u(x)$ and $\us(x)$. So, instead of looking at the set $\u(x)$ of univoque bases we focus on the symbolic set $\us(x)$ of univoque sequences. 
In \cite{Lu_Tan_Wu_2014},  L\"{u} et al. {proved} that $\u(x)$  has more weight at the right endpoint $q=2$, i.e., 
{$\lim_{\de\ra 0}\dim_H(\u(x)\cap[2-\de, 2])=1,$   {and} for $q\in(1,2)$ we have  $\lim_{\de\ra 0}\dim_H(\u(x)\cap[q-\de, q+\de])<1$.}
 Accordingly, in the symbolic space the cylinder set
\begin{equation*}
C_n(x)=\set{(a_i)\in\us(x): a_1\cdots a_n=x_1\cdots x_n}
\end{equation*}
has the same topological entropy as the whole set $\us(x)$ for any $n\ge 1$, where $(x_i)=\Phi_x(2)$ is the quasi-greedy dyadic expansion of $x$. 
{Here for a set $X\subseteq\set{0,1}^\N$ its topological entropy $h(X)$ is defined by
\[
h(X):=\liminf_{k\ra\f}\frac{\log|B_n(X)|}{k},
\]
where $|B_n(X)|$ denotes the total number of length $n$ blocks appearing in sequences of $X$.}

Motivated by this observation,  we will construct a symbolic Cantor subset $\us_n(x)$ contained in the cylinder set $C_n(x)$ for all large integers $n$. 
In the next section  we will show that the corresponding  Cantor set $\u_n(x)=\Phi^{-1}_x(\us_n(x))$ has a thickness larger than one for all large integers $n$, and implying that $\u_n(x)+\la\u_n(x)$ contains an interval for each $\la\ne 0$. 
Since $\u_n(x)\subset\u(x)$, this will prove Theorem \ref{t11}.   

The following result was implicitly given by L\"{u} et al.~{\cite[Section 4]{Lu_Tan_Wu_2014}}, and we refer to this article for more details.

\begin{lemma}\label{l21}
Fix  $x\in(0,1]$ arbitrarily and set $(x_i):=\Phi_x(2)$. 
There exist $M\in\N\cup\set{0}$ and a strictly increasing  sequence $(N_j)\subset\set{3,4,\ldots}$ such that the following conditions are satisfied for each $N_j$:

\begin{enumerate}[\upshape (i)]
\item we have 
\begin{equation*}
x_{M+N_j}=1\qtq{and}\us_{N_j}(x)\subseteq\us(x),
\end{equation*}
where  {$\us_{N_j}(x)$} is the set of sequences 
\begin{equation*}
x_1\cdots x_{M+N_j}\ep_1\ep_2\cdots 
\end{equation*}
satisfying  
\begin{equation*}
\ep_1=0,\qtq{and}\ep_{n+1}\cdots \ep_{n+N_j}\notin\set{0^{N_j}, 1^{N_j}}\qtq{for all}n\ge 0;
\end{equation*}
\item we have $(c_i)\lge 0^{M}10^\f$ for all sequences $(c_i)\in\us_{N_j}(x)$.
\item we have $((1^{N_j-1}0)^\f)_q\le 1$ for all bases $q\in\Phi_x^{-1}(\us_{N_j}(x))$.
\end{enumerate}
\end{lemma}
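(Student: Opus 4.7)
The plan is to apply the Parry-type lexicographic characterization of univoque sequences and carefully choose $M$ from the first $1$-position of $(x_i)=\Phi_x(2)$, and $(N_j)$ from its later $1$-positions. Write $\alpha(q):=\Phi_1(q)$ for the quasi-greedy expansion of $1$ in base $q$ and $\overline{\alpha(q)}$ for its digit-wise reflection. The criterion I will use is that $(c_i)\in\{0,1\}^\N$ belongs to $\us(x)$ iff the unique $q\in(1,2]$ with $((c_i))_q=x$ satisfies that every shift $\sigma^n((c_i))$ lies strictly between $\overline{\alpha(q)}$ and $\alpha(q)$ in the appropriate lex sense.

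Since $(x_i)$ is quasi-greedy it is not eventually $0^\f$, so $\{i:x_i=1\}$ is infinite. I would set $k_0:=\min\{i:x_i=1\}$ and take $M:=k_0-1$, so that $x_1\cdots x_M=0^M$ and $x_{M+1}=1$. Then let $(N_j)\subset\{3,4,\ldots\}$ enumerate in increasing order those $N\ge 3$ with $x_{M+N}=1$; this set is infinite, giving the first half of (i). Condition (ii) is then immediate, since every $(c_i)\in\us_{N_j}(x)$ agrees with $(x_i)$ on the first $M+1$ positions, and $x_{M+1}=1$ yields $(c_i)\lge 0^M 1 0^\f$.

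For the inclusion $\us_{N_j}(x)\subseteq\us(x)$ and for (iii), pick $(c_i)\in\us_{N_j}(x)$ and let $q\in(1,2]$ be determined by $((c_i))_q=x$; such $q$ exists and is unique because $(c_i)$ inherits infinitely many $1$s from $(x_i)$ and satisfies $((c_i))_2\ge x$. The block-avoidance condition $\ep_{n+1}\cdots\ep_{n+N_j}\notin\{0^{N_j},1^{N_j}\}$ for all $n\ge 0$ implies, by a short induction, that $\sigma^n((\ep_i))\lle(1^{N_j-1}0)^\f$ and $\sigma^n((\ep_i))\lge(0^{N_j-1}1)^\f$ for every $n\ge 0$. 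Combined with the lex characterization of univoqueness, this forces $\alpha(q)\lge(1^{N_j-1}0)^\f$, which is exactly condition (iii); moreover it delivers the strict comparisons $\overline{\alpha(q)}\prec\sigma^n((c_i))\prec\alpha(q)$ for all shifts $n\ge M+N_j$. For the remaining shifts $0\le n<M+N_j$ one uses that $(x_i)$ is itself dyadic quasi-greedy (a very restrictive condition on its shifts) together with the prescription $\ep_1=0$ acting as a buffer separating the frozen prefix from the free tail, so that the first position where $\sigma^n((c_i))$ differs from $\alpha(q)$ lies in a controllable region.

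The main obstacle is this last step: controlling the lex comparisons at the ``boundary'' shifts $0\le n<M+N_j$, where the shifted sequence straddles the frozen prefix $x_1\cdots x_{M+N_j}$ and the variable tail. This is handled by requiring $N_j$ to be large enough that $\alpha(q)$ begins with many $1$s, dominating any shift of the prefix of $(x_i)$. It is precisely the technical content of Section~4 of \cite{Lu_Tan_Wu_2014}, from which the lemma is extracted.
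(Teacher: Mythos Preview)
Your uniform choice of $M=k_0-1$ and $N_j$ running over later $1$-positions of $(x_i)$ does not work, and the obstruction lies upstream of the ``boundary shifts'' you defer. Take $x=2/3$, so $(x_i)=(10)^\f$. The requirement $x_{M+N_j}=1$ forces $M+N_j$ to be odd; then $x_{M+N_j+1}=0=\ep_1$, $x_{M+N_j+2}=1$, $x_{M+N_j+3}=0$, and the sequence $(c_i):=x_1\cdots x_{M+N_j}\,0\,1\,1\,\ep_4\ep_5\cdots$ lies in $\us_{N_j}(x)$ (the initial tail block $\ep_1\ep_2\ep_3=011$ is allowed and one may continue avoiding $0^{N_j}$ and $1^{N_j}$), but $(c_i)\succ(x_i)$ with first difference at position $M+N_j+3$. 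Since $\Phi_x$ is strictly increasing and $\Phi_x(2)=(x_i)$, there is no $q\in(1,2]$ with $\Phi_x(q)=(c_i)$; hence $(c_i)\notin\us(x)$. (This also shows your side claim ``$((c_i))_2\ge x$'' is inverted; one needs $((c_i))_2\le x$ for a base $q\le 2$ to exist.) The same argument defeats \emph{every} pair $(M,N_j)$ with $x_{M+N_j}=1$ for this $x$, so the prefix cannot be taken literally as an initial segment of $(x_i)$ in general.

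The paper's proof proceeds instead by a four-case analysis on the block structure of $(x_i)$. In the eventually-$1^\f$ cases (a) and (b) the prefix is indeed $x_1\cdots x_{M+N_j}$, but in the generic cases (c) and (d) the prefix actually used ends in $01$ and differs from $x_1\cdots x_{M+N_j}$ in its last two digits; this modification is precisely what forces every sequence in the constructed symbolic set to lie strictly below $(x_i)$, after which the inclusion into $\us(x)$ is read off from \cite{Lu_Tan_Wu_2014}. Separately, your derivation of (iii) is circular: the bounds $\sigma^n((\ep_i))\lle(1^{N_j-1}0)^\f$ constrain the tail of $(c_i)$, not $\Phi_1(q)$, and the univoqueness criterion only relates shifts of $(c_i)$ to $\Phi_1(q)$ once (i) is already known. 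In the paper, (iii) is obtained \emph{after} (i), via the inequality $\sum_{i=1}^{N_j}q^{-i}<1$.
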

{Before proving the lemma we mention that although the sets $\us_{N_j}(x)$ also depend on $M$, we omit this in the notation for simplicity, because in the rest of the paper $x$ and hence $M$ will be fixed.}

\begin{proof}
Note that $(x_i)=\Phi_x(2)$ is the  dyadic expansion of $x$ not ending with $0^\f$. 
We distingush four cases.

\begin{enumerate}[\upshape (a)]
\item If $(x_i)=x_1\cdots x_m 01^\f$ for some $m\ge 0$, then by \cite{Lu_Tan_Wu_2014} we have
\begin{equation*}
x_1\cdots x_m01^{j+2}\,\ep_1\ep_2\cdots\;\in\us(x)
\end{equation*}
for all $j\ge 1$, where $\ep_1=0$, and for $N_j:=j+2\ge 3$ we have $\ep_{n+1}\cdots \ep_{n+N_j}\notin\set{0^{N_j}, 1^{N_j}}$ for all $n\ge 0$. 
This yields (i) and (ii) by taking $M=m+1$.  
Furthermore, for each $q\in\Phi_x^{-1}(\us_{N_j}(x))$ the inequality 
\begin{equation*}
\sum_{i=1}^{N_j}\frac{1}{q^i}<1
\end{equation*}
holds, and hence (iii) follows:
\begin{equation*}
((1^{N_j-1}0)^\f)_q 
=\left(\sum_{i=1}^{N_j-1}\frac{1}{q^i}\right)\left(\sum_{i=0}^{\infty}\frac{1}{q^{iN_j}}\right)
<\left(1-\frac{1}{q^{N_j}}\right)\left(\sum_{i=0}^{\infty}\frac{1}{q^{iN_j}}\right)
=1.
\end{equation*}

\item If $(x_i)=1^\f$, then  $x=1$.   By a similar argument as in (a) it follows that  
\begin{equation*}
1^{j+2}\ep_1\ep_2\cdots\in\us(x)
\end{equation*}
for any $j\ge 1$, where $\ep_1=0$, and for $N_j:=j+2\ge 3$ we have  $\ep_{n+1}\cdots\ep_{n+N_j}\notin\set{0^{N_j}, 1^{N_j}}$ for all $n\ge 0$. 
This proves (i) and (ii) by taking $M=0$. 
Furthermore, for any $q\in\Phi_x^{-1}(\us_{N_j}(x))$ we have 
\begin{equation*}
\sum_{i=1}^{N_j}\frac{1}{q^i}<x=1;
\end{equation*}
this yields (iii) as above.

\item If $(x_i)=1^{r_1}0^{s_1}1^{r_2}0^{s_2}\cdots 1^{r_k}0^{s_k}\cdots$ with $r_k, s_k\ge 1$ for all $k\ge 1$, then by \cite{Lu_Tan_Wu_2014} we deduce that 
\begin{equation*}
1^{r_1}0^{s_1}\cdots 1^{r_{j+2}}0^{s_{j+2}}01\, \ep_1\ep_2\cdots \in\us(x)
\end{equation*}
for all $j\ge 1$, where $\ep_1=0$ and for ${N_j}:=r_1+s_1+\cdots+r_{j+2}+s_{j+2}-2\ge 4$  we have  
$\ep_{n+1}\cdots \ep_{n+{N_j}}\notin\set{0^{N_j}, 1^{N_j}}$ for all $n\ge 0$. 
Therefore, (i) and (ii) follow  by taking  $M=4$.
Furthermore,  (iii) holds as in the preceding cases because
\begin{equation*}
\sum_{i=1}^{N_j}\frac{1}{q^i}<1
\end{equation*}
for all $q\in\Phi_x^{-1}(\us_{N_j}(x))$.

\item If $(x_i)=0^{r_1}1^{s_1}0^{r_2}1^{s_2}\cdots 0^{r_k}1^{s_k}\cdots$ with $r_k, s_k\ge 1$ for all $k\ge 1$, then by \cite{Lu_Tan_Wu_2014} we have
\begin{equation*}
0^{r_1}1^{s_1}\cdots 0^{r_{j+1}}1^{s_{j+1}} 0^{r_{j+2}}01\ep_1\ep_2\cdots \in\us(x)
\end{equation*}
for all $j\ge 1$, where $\ep_1=0$, and for ${N_j}:=s_1+r_2+s_2+\cdots+r_{j+1}+s_{j+1}+r_{j+2}-1\ge 3$ we have  $\ep_{n+1}\cdots \ep_{n+{N_j}}\notin\set{0^{N_j}, 1^{N_j}}$ for all $n\ge 0$. 
This yields (i) and (ii) by taking $M=r_1+3$.
Finally, (iii) holds again because
\begin{equation*}
\sum_{i=1}^{N_j}\frac{1}{q^i}<1
\end{equation*}
for all $q\in\Phi_x^{-1}(\us_{N_j}(x))$.\qedhere
\end{enumerate}
\end{proof}

\begin{remark}\label{r22}
Lemma \ref{l21} does not hold for $x>1$. 
Indeed, Lemma \ref{l21} (i) states that the set $\us(x)$ contains sequences with arbitrarily long blocks of consecutive zeros, and for this $\us(x)$ must contain bases arbitrarily close to $2$: this follows from the usual lexicographic characterization of unique expansions.
However, for $x>1$ the largest base for which $x$ has an expansion is 
{$
q_x:=1+ {1}/{x}<2.
$}
\end{remark}

By Lemma \ref{l21} the tails of the sequences in $\us_{N_j}(x)$ contain neither $N_j$ consecutive zeros, nor $N_j$ consecutive ones. 
Furthermore,  $\us_{N_j}(x)\subseteq\us(x)$ for all $x\in (0,1]$ and $j\ge 1$. 
Setting
\begin{equation*}
\u_{N_j}(x):=\Phi_x^{-1}(\us_{N_j}(x))=\set{q\in(1,2]: \Phi_x(q)\in\us_{N_j}(x)}
\end{equation*}
we have
\begin{equation}\label{22}
\u_{N_j}(x)\subseteq\u(x)
\end{equation}
for all $x\in (0,1]$ and $j\ge 1$. 
Hence the algebraic sum {$\u(x)+\la\u(x)$ containing an interval} will follow if we prove that the algebraic sum $\u_{N_j}(x)+\la\u_{N_j}(x)$ contains an interval for any fixed $\la\ne 0$, if $j\ge 1$ is 
sufficiently large. 
For this we will apply the results of Newhouse \cite{Newhouse-1979} and Astels \cite{Astels_2000}. 
Notice that $\u_{N_j}(x)$ is a Cantor set for any $x\in(0,1]$ and $j\ge 1$.  
In order to estimate  the thickness of $\u_{N_j}(x)$  we need to describe its geometrical structure. 
For this we need to find an efficient way to construct $\u_{N_j}(x)$ by successively removing  a sequence of open intervals from a closed interval. 

Fix $x\in(0, 1]$ and $j\ge 1$ arbitrarily.
Since the coding map $\Phi_x$ defined in \eqref{21} is strictly increasing,  each $q\in \u_{N_j}(x)$ may be encoded by a unique sequence $\Phi_x(q)=(a_i)\in\us_{N_j}(x)$. 
Conversely,  each sequence $(a_i)\in\us_{N_j}(x)$ can be decoded to a unique base $q\in\u_{N_j}(x)$. 
Let $(x_i)=\Phi_x(2)$ be the dyadic expansion of $x$ not ending with $0^\f$. 
Suppose that the integer $M$ and the sequence $(N_j)$ depending on $x$ are defined as in Lemma  \ref{l21}.
Given $j\ge 1$, let $\Omega_j(x)$ be the set of all finite initial words of length larger than $M+N_j$ occurring in $\us_{N_j}(x)$, i.e., 
\begin{equation*}
\Omega_j(x)=\set{\om_1\cdots \om_n: n>M+N_j\qtq{and}\om_1\cdots\om_n c_1c_2\cdots\in\us_{N_j}(x)\qtq{for some}(c_i)}.
\end{equation*}
Since the tails of  the sequences in $\us_{N_j}(x)$  contain neither  $N_j$ consecutive zeros, nor $N_j$ consecutive ones, the words of $\Omega_j(x)$ are divided into $2N_j-2$ disjoint classes: the words ending with $10^k$ and those ending with $01^k$ for some $k\in\set{1,2,\ldots, N_j-1}$.

{Recall that a symbolic interval $[(a_i), (b_i)]$ corresponds to the closed interval $[p, q]$, if $(a_i)=\Phi_x(p)$ and $(b_i)=\Phi_x(q)$.} For each $\omega\in\Omega_j(x)$ we denote by $\I_\omega$ the smallest {symbolic} interval containing all sequences of $\us_{N_j}(x)$ that begin with $\omega$. The following explicit description of these intervals follows directly from the definition of $\us_{N_j}(x)$.
 
\begin{lemma}\label{l23}
Let $\omega\in\Omega_j(x)$.
\begin{enumerate}[\upshape (i)]
\item If $\omega$ ends with $10^k$ for some $k\in\set{1,\ldots, N_j-1}$, then 
\begin{equation*}
\I_{\omega}=\big[\omega 0^{N_j-1-k}(10^{N_j-1})^\f,\; \omega(1^{N_j-1}0)^\f\big].
\end{equation*}

\item If $\omega$ ends with $01^k$ for some $k\in\set{1,\ldots, N_j-1}$, then 
\begin{equation*}
\I_\omega=\big[\omega(0^{N_j-1}1)^\f,\; \omega1^{N_j-1-k}(01^{N_j-1})^\f\big].
\end{equation*}
\end{enumerate}
\end{lemma}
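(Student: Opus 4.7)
The plan is a direct verification from the combinatorial definition of $\us_{N_j}(x)$ given in Lemma~\ref{l21}(i). A sequence belongs to $\us_{N_j}(x)$ exactly when it starts with the mandatory prefix $x_1\cdots x_{M+N_j}$ and its remainder contains neither $N_j$ consecutive zeros nor $N_j$ consecutive ones. Because $|\omega|>M+N_j$, the word $\omega$ already carries this prefix, so sequences of $\us_{N_j}(x)$ beginning with $\omega$ are exactly the extensions $\omega\eta$ avoiding the two constant factors $0^{N_j}$ and $1^{N_j}$. The problem therefore reduces to identifying the lexicographically smallest and largest admissible tails $\eta$.

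For case (i), where $\omega$ ends in $10^k$: to minimize one appends as many zeros as possible, but the trailing $0^k$ of $\omega$ has already consumed $k$ of the permitted $N_j-1$ consecutive zeros, so at most $N_j-1-k$ further zeros may follow before a $1$ is forced; afterwards the minimum continues with the periodic pattern $1\,0^{N_j-1}$, yielding the lower endpoint $\omega\,0^{N_j-1-k}(10^{N_j-1})^\f$. Dually, to maximize one appends as many ones as possible, and since $\omega$ ends in $0$ a fresh run $1^{N_j-1}$ is permitted before a $0$ becomes necessary, giving the upper endpoint $\omega(1^{N_j-1}0)^\f$. Any earlier deviation from these patterns would instantly complete a forbidden $0^{N_j}$ or $1^{N_j}$ factor, so no admissible extension can lie strictly outside the stated interval; inspection shows that the two extremal sequences themselves contain no constant factor of length $N_j$, hence lie in $\us_{N_j}(x)$, which justifies taking the \emph{closed} interval as the smallest one containing the set.

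Case (ii), where $\omega$ ends in $01^k$, is completely symmetric to case (i) under the involution exchanging $0$ and $1$ (which reverses lexicographic order), and the analogous argument produces the stated interval $\bigl[\omega(0^{N_j-1}1)^\f,\,\omega 1^{N_j-1-k}(01^{N_j-1})^\f\bigr]$. I do not anticipate a real obstacle; the only delicate point is verifying that both extremal sequences actually belong to $\us_{N_j}(x)$ rather than merely being suprema or infima of the set in its closure, since only then is the claimed closed interval equal to $\I_\omega$ and not some proper superset.
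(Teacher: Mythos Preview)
Your proposal is correct and follows exactly the approach the paper intends: the paper simply states that Lemma~\ref{l23} ``follows directly from the definition of $\us_{N_j}(x)$'' and gives no further argument, and your verification supplies precisely those details by identifying the lexicographically extremal admissible continuations of $\omega$ and checking that they themselves lie in $\us_{N_j}(x)$.
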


By Lemma \ref{l21} {(i)} all sequences in $\us_{N_j}(x)$ begin with ${x_1\cdots x_{M+N_j}0=}x_1\cdots x_{M+N_j-1}\,10$.
Applying Lemma \ref{l23} (i) it follows that the smallest {symbolic} interval which contains $\us_{N_j}(x)$ is 
\begin{equation*}
\I_{x_1\cdots x_{M+N_j}0}=\big[x_1\cdots x_{M+N_j}(0^{N_j-1}1)^\f,\; x_1\cdots x_{M+N_j}(01^{N_j-1})^\f\big].
\end{equation*}
An immediate consequence of Lemma \ref{l23} is the following:

\begin{lemma}
\label{l24}
Let $\omega\in\Omega_j(x)$.
\begin{enumerate}[\upshape (i)]
\item If $\omega$ ends with $10^{N_j-1}$, then 
\begin{equation*}
\omega 0\notin\Omega_j(x)\qtq{and} \I_{\omega 1}=\I_\omega.
\end{equation*}

\item If $\omega$ ends with $01^{N_j-1}$, then 
\begin{equation*}
\omega 1\notin\Omega_j(x)\qtq{and}\I_{\omega 0}=\I_\omega.
\end{equation*}

\item In the remaining cases, $\I_\omega$ is the disjoint union of the non-empty intervals 
\begin{equation*}
\I_{\omega 0}, \quad \I_{\omega 1}\qtq{and}\G_\omega:=\I_\omega\setminus(\I_{\omega 0}\cup \I_{\omega 1}).
\end{equation*}
\end{enumerate}
\end{lemma}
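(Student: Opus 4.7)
The plan is to derive all three parts directly from the explicit description of $\I_\omega$ given in Lemma \ref{l23}, using the forbidden-block condition of Lemma \ref{l21} (i) to control which one-letter extensions of $\omega$ remain in $\Omega_j(x)$.

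For (i) I would argue as follows: if $\omega$ ends with $10^{N_j-1}$, then $\omega 0$ ends with $10^{N_j}$, so its tail contains the forbidden block $0^{N_j}$; hence $\omega 0$ admits no extension in $\us_{N_j}(x)$ and $\omega 0\notin\Omega_j(x)$. Every sequence in $\us_{N_j}(x)$ beginning with $\omega$ must therefore begin with $\omega 1$, and $\I_{\omega 1}=\I_\omega$ follows immediately. Part (ii) is proved by exchanging the roles of $0$ and $1$ throughout this argument.

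For (iii), the standing hypothesis forces $\omega$ to end either with $10^k$ or with $01^k$ for some $k\in\{1,\dots,N_j-2\}$, and in either case both $\omega 0$ and $\omega 1$ still avoid the forbidden blocks, so both belong to $\Omega_j(x)$. In the subcase where $\omega$ ends with $10^k$, applying Lemma \ref{l23} separately to $\omega$, $\omega 0$ and $\omega 1$ yields
\begin{align*}
\I_\omega &=\bigl[\omega 0^{N_j-1-k}(10^{N_j-1})^\f,\ \omega(1^{N_j-1}0)^\f\bigr],\\
\I_{\omega 0}&=\bigl[\omega 0^{N_j-1-k}(10^{N_j-1})^\f,\ \omega 0(1^{N_j-1}0)^\f\bigr],\\
\I_{\omega 1}&=\bigl[\omega 1(0^{N_j-1}1)^\f,\ \omega(1^{N_j-1}0)^\f\bigr],
\end{align*}
where the right endpoint of $\I_{\omega 1}$ is rewritten via the eventual-periodicity identity $1^{N_j-1}(01^{N_j-1})^\f=(1^{N_j-1}0)^\f$. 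From this one reads off that $\I_\omega$ and $\I_{\omega 0}$ share a left endpoint and $\I_\omega$ and $\I_{\omega 1}$ share a right endpoint, while the strict inequality $\omega 0(1^{N_j-1}0)^\f\prec\omega 1(0^{N_j-1}1)^\f$, immediate from the $(|\omega|+1)$-th letter, shows that $\I_{\omega 0}$ and $\I_{\omega 1}$ are disjoint. Consequently $\G_\omega$ is precisely the non-empty open symbolic interval strictly between them. The subcase where $\omega$ ends with $01^k$ is handled by the same computation after swapping $0\leftrightarrow 1$ throughout.

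No real obstacle is expected: the argument is essentially bookkeeping once Lemma \ref{l23} is in hand, and the only mildly subtle point is spotting the identity $1^{N_j-1}(01^{N_j-1})^\f=(1^{N_j-1}0)^\f$ (and its dual) which guarantees that the outermost endpoints of $\I_\omega$ are actually realised by $\I_{\omega 0}$ and $\I_{\omega 1}$ rather than lying in the gap.
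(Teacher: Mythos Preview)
Your proposal is correct and matches the paper's approach exactly: the paper simply states that Lemma~\ref{l24} is ``an immediate consequence of Lemma~\ref{l23}'' and gives no further proof, so your argument is just the spelled-out version of what the authors intend. The identity $1^{N_j-1}(01^{N_j-1})^\f=(1^{N_j-1}0)^\f$ you flag is indeed the only point requiring a moment's thought, and you handle it correctly.
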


Now we may describe the geometrical structure of $\u_{N_j}(x)$. 
Given a {symbolic}  interval $\I=[(a_i), (b_i)]$ with $(a_i), (b_i)\in \us_{N_j}(x)$, we denote by $I=[p, q]$ the corresponding interval in $\R$, where $p=\Phi_x^{-1}((a_i))$ and $q=\Phi_x^{-1}((b_i))$. 
Then the symbolic  intervals $\I_\om, \G_\om$ are transferred  to the real intervals $I_\om, G_\om$, respectively.    
Set
\begin{equation*}
\Omega_j^*(x):=\set{\omega\in\Omega_j(x): G_\om\ne\emptyset}.
\end{equation*}

\begin{lemma}\label{l25}
The non-empty open intervals $G_\omega, \omega\in\Omega_j^*(x)$ are pairwise disjoint, and 
\begin{equation*}
\u_{N_j}(x)=I_{x_1\cdots x_{M+N_j} 0}\setminus\bigcup_{\omega\in\Omega_j^*(x)} G_\omega.
\end{equation*}
\end{lemma}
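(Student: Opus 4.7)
The plan is to prove disjointness first, then establish the set identity by a tree-traversal argument that converts the inclusion $q\in I_{x_1\cdots x_{M+N_j}0}\setminus\bigcup_\omega G_\omega$ into a sequence belonging to $\us_{N_j}(x)$.

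For disjointness, I would fix distinct $\omega,\omega'\in\Omega_j^*(x)$. If one is a proper prefix of the other, say $\omega'$ extends $\omega$ with a first new digit $a\in\set{0,1}$, then Lemma~\ref{l24} gives $\I_{\omega'}\subseteq \I_{\omega a}\subseteq \I_{\omega 0}\cup \I_{\omega 1}$, hence $G_{\omega'}\subseteq I_{\omega 0}\cup I_{\omega 1}$, which is disjoint from $G_\omega=I_\omega\setminus(I_{\omega 0}\cup I_{\omega 1})$. Otherwise they share a longest common prefix $\tau$, with $\omega$ continuing as $\tau 0\cdots$ and $\omega'$ as $\tau 1\cdots$ (or vice versa); then $G_\omega\subseteq I_{\tau 0}$ and $G_{\omega'}\subseteq I_{\tau 1}$. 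Because every sequence in $\I_{\tau 0}$ begins with $\tau 0$ and every sequence in $\I_{\tau 1}$ with $\tau 1$, the two real intervals share at most a common endpoint, which lies outside the open intervals $G_\omega,G_{\omega'}$.

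For the inclusion $\u_{N_j}(x)\subseteq I_{x_1\cdots x_{M+N_j}0}\setminus\bigcup_\omega G_\omega$: given $q\in\u_{N_j}(x)$, the sequence $(a_i):=\Phi_x(q)$ lies in $\us_{N_j}(x)$ and therefore begins with $x_1\cdots x_{M+N_j}0$ by Lemma~\ref{l21}(i); so $q\in I_{x_1\cdots x_{M+N_j}0}$. If $q$ lay in $G_\omega$ for some $\omega\in\Omega_j^*(x)$, then $q\in I_\omega$ would force $(a_i)$ to start with $\omega$ (both endpoints of $\I_\omega$ given by Lemma~\ref{l23} begin with $\omega$), and the next digit would put $(a_i)$ in $\I_{\omega 0}$ or $\I_{\omega 1}$, so $q\in I_{\omega 0}\cup I_{\omega 1}$, contradicting $q\in G_\omega$.

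The reverse inclusion is handled by an inductive prefix construction. Let $q$ lie in the right-hand side, and set $(a_i):=\Phi_x(q)$; define $\omega_{M+N_j+1}:=x_1\cdots x_{M+N_j}0\in\Omega_j(x)$, which contains $q$ in its real interval. Supposing $\omega_n\in\Omega_j(x)$ with $q\in I_{\omega_n}$, I invoke Lemma~\ref{l24}: in cases (i) and (ii) one has $\I_{\omega_n 1}=\I_{\omega_n}$ or $\I_{\omega_n 0}=\I_{\omega_n}$, so the next digit is uniquely forced and $q$ remains inside the corresponding symbolic interval; in case (iii) one has $\omega_n\in\Omega_j^*(x)$, and the hypothesis $q\notin G_{\omega_n}$ yields $q\in I_{\omega_n 0}\cup I_{\omega_n 1}$, selecting the next digit unambiguously. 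This produces $(a_i)$ whose every prefix of length exceeding $M+N_j$ lies in $\Omega_j(x)$, and in particular the tail past position $M+N_j$ can contain no block $0^{N_j}$ or $1^{N_j}$: such a block would have required, one step earlier, a prefix ending in $10^{N_j-1}$ (resp.\ $01^{N_j-1}$) followed by a $0$ (resp.\ $1$), contradicting the forced opposite choice dictated by case (i) (resp.\ (ii)). The boundary case of an initial run $a_{M+N_j+1}\cdots a_{M+N_j+N_j}=0^{N_j}$ is handled by $x_{M+N_j}=1$ from Lemma~\ref{l21}(i). Hence $(a_i)\in\us_{N_j}(x)$, i.e., $q\in\u_{N_j}(x)$.

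The main technical obstacle I anticipate is the bookkeeping in the reverse inclusion: one must verify carefully that in cases (i) and (ii) of Lemma~\ref{l24} the equality of symbolic intervals really translates to equality of real intervals (no extra bases sneak in), and that the no-long-run condition on $(a_i)$ is a consequence of $\omega_n\in\Omega_j(x)$ for all $n$ rather than an additional hypothesis to be imposed.
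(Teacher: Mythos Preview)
Your proof is correct and follows essentially the same approach as the paper's: both derive the easy inclusion from Lemmas~\ref{l21}--\ref{l24} and establish the reverse inclusion by an inductive tree-traversal through the intervals $I_{\omega_n}$, using the trichotomy of Lemma~\ref{l24} at each step. The only organizational difference is that you prove disjointness of the $G_\omega$ upfront by a direct prefix case analysis, whereas the paper obtains it as a byproduct of the inductive removal procedure; the technical concern you flag about symbolic versus real intervals is harmless, since the real interval $I_\omega$ is by definition $[\Phi_x^{-1}(\text{left endpoint of }\I_\omega),\Phi_x^{-1}(\text{right endpoint of }\I_\omega)]$, so equal symbolic endpoints immediately give equal real intervals.
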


\begin{proof}
The map $\Phi_x: \u_{N_j}(x)\ra \us_{N_j}(x)$ is strictly increasing, hence bijective. 
Lemmas \ref{l21}--\ref{l24} imply that
\begin{equation*}
\u_{N_j}(x)\subseteq I_{x_1\cdots x_{M+N_j} 0}\setminus\bigcup_{\omega\in\Omega_j^*(x)} G_\omega.
\end{equation*}

For the converse inclusion,  first we remove from the closed interval $I_{x_1\cdots x_{M+N_j}0}$ the non-empty open  interval $G_{x_1\cdots x_{M+N_j} 0}$ to obtain the union of two non-degenerate disjoint closed intervals $I_{x_1\cdots x_{M+N_j}00}$ and $I_{x_1\cdots x_{M+N_j}01}$. {We emphasize that the non-empty of $G_{x_1\ldots x_{M+N_j}0}$ follows by Lemma \ref{l24}, since $N_j\ge 3$ and the word $x_1\ldots x_{M+N_j}0$ ends with $10$ by Lemma \ref{l21}.}
Then we proceed by induction. 
Assume that after a finite number of steps we get a disjoint union of non-degenerate closed intervals $I_{\omega}$, where $\omega$ runs over all length $n (>M+N_j)$ words of $\Omega_j(x)$. 
We will construct all level $n+1$ sub-intervals in the following way.
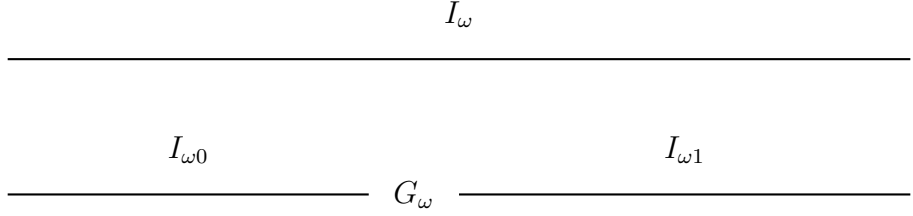
\begin{figure}[h!]
\begin{center}
\begin{tikzpicture}[
    scale=12,
    axis/.style={very thick, ->},
    important line/.style={thick},
    dashed line/.style={dashed, thin},
    pile/.style={thick, ->, >=stealth', shorten <=2pt, shorten
    >=2pt},
    every node/.style={color=black}
    ]
    
    \draw[important line]  (0,0.85)--( 1.0, 0.85);
    \node[] at(0.5, 0.9){$I_\om$}; 
    \draw[important line] (0, 0.7)--(0.4, 0.7);
     \node[] at(0.2, 0.75){$I_{\om 0}$}; 
    \draw[important line] (0.5, 0.7)--(1, 0.7);
          \node[] at(0.75, 0.75){$I_{\om 1}$};\node[] at(0.45, 0.7){$G_\om$};

\end{tikzpicture} 
\end{center}
\caption{The geometrical structure of the basic intervals $I_\om, I_{\om 0}, I_{\om 1}$ and the gap interval $G_\om$.}\label{fig:1}
\end{figure}
If $\omega\in\Omega_j^*(x)$, then we remove the open interval $G_\omega$, and  replace $I_\omega$ by the two disjoint  closed subintervals $I_{\omega 0}$ and $I_{\omega 1}$ (see Figure \ref{fig:1}). If $\omega\notin\Omega_j^*(x)$, then either $\omega 0\in\Omega_j(x)$ or $\omega 1\in\Omega_j(x)$. In this case  we keep the interval $I_\omega$ with  either   $I_{\omega}=I_{\omega 0}$ or $I_{\omega}=I_{\omega 1}$.  

Repeating this procedure  indefinitely  we construct the set $\u_{N_j}(x)$, and we obtain the converse inclusion
\begin{equation*}
I_{x_1\cdots x_{M+N_j} 0}\setminus\bigcup_{\omega\in\Omega_j^*(x)} G_\omega\subseteq \u_{N_j}(x).
\end{equation*}
Furthermore, we obtain that the gap intervals $G_\om$ with $ \om\in\Omega_j^*(x)$ are pairwise disjoint.  
\end{proof}

\section{Proof of Theorem \ref{t11}}\label{s3}

By Lemma \ref{l25} the Cantor set $\u_{N_j}(x)$ can be obtained by successively  removing from the closed interval $I_{x_1\cdots x_{M+N_j}0}$ a sequence of open intervals. By using the notation from Lemma \ref{l25} we define the thickness of $\u_{N_j}(x)$.
{\begin{definition}\label{def:thickness}
The \emph{thickness} of $\u_{N_j}(x)$ is defined by
\begin{equation*}
\tau(\u_{N_j}(x)):=\inf_{\omega\in\Omega_j^*(x)}\set{\frac{| I_{\omega 0}|}{| G_\omega|}, \frac{| I_{\omega 1}|}{|G_{\omega}|}},
\end{equation*}
where $|I|:=q-p$ denotes the length of a interval $I=[p, q]$.
\end{definition}
We point out that the thickness given in Definition \ref{def:thickness} coincides with that defined by Astels \cite{Astels_2000}, and it is essentially the same as that defined by Newhouse \cite{Newhouse-1979}.}
Notice that the thickness is stable under non-trivial scaling, i.e., $\tau(\la\u_{N_j}(x))=\tau(\u_{N_j}(x))$ for all $\la\ne 0$. 
The following result follows from \cite[Theorem 2.4]{Astels_2000}.

\begin{lemma}\label{l31}
If $\tau(\u_{N_j}(x))\ge 1$, then $\u_{N_j}(x)+\la \u_{N_j}(x)$ contains an interval for all $\la\ne 0$.
\end{lemma}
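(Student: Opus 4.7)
The plan is to reduce Lemma \ref{l31} to a direct invocation of Astels' theorem \cite[Theorem 2.4]{Astels_2000}. Astels' result states, roughly, that for two Cantor subsets $C_1, C_2 \subset \R$ with thicknesses $\tau_1, \tau_2$, the algebraic sum $C_1 + C_2$ contains an interval provided that $\tau_1\tau_2 \ge 1$ (equivalently $\frac{\tau_1}{1+\tau_1}+\frac{\tau_2}{1+\tau_2}\ge 1$) and that the convex hulls of $C_1$ and $C_2$ are suitably linked. To apply this in our setting we take $C_1 := \u_{N_j}(x)$ and $C_2 := \la \u_{N_j}(x)$.

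First I would verify the thickness hypothesis. By Lemma \ref{l25} the set $\u_{N_j}(x)$ is a Cantor set, with a defining sequence of open gaps $G_\omega$ and bridges $I_{\omega 0}, I_{\omega 1}$, so Definition \ref{def:thickness} genuinely applies, and the hypothesis gives $\tau(\u_{N_j}(x))\ge 1$. Next I would observe that the thickness is invariant under multiplication by any $\la\ne 0$: the map $y\mapsto \la y$ is affine and hence rescales every $|G_\omega|$, $|I_{\omega 0}|$, $|I_{\omega 1}|$ by the common factor $|\la|$, leaving every ratio in Definition \ref{def:thickness} unchanged. Consequently
\begin{equation*}
\tau(\la\u_{N_j}(x))=\tau(\u_{N_j}(x))\ge 1,
\end{equation*}
so that the product of thicknesses satisfies $\tau(C_1)\cdot\tau(C_2)\ge 1$, which is the numerical condition required by Astels' theorem.

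The only remaining issue is the position hypothesis: Astels' theorem guarantees that $C_1+C_2$ contains an interval unless one of the Cantor sets lies in a gap of the other, in which case the sum is still the union of two shifted copies of Cantor sets and may fail to contain an interval. To handle this I would argue that it suffices to exhibit a single interval in $C_1+C_2$, and then translate the discussion to any pair of pieces of $C_1$ and $C_2$ whose convex hulls overlap: since $C_1,C_2$ are obtained by an infinite process of splitting bridges (Lemma \ref{l25}), I can zoom in to deep bridges $I_{\omega}\subset C_1$ and $\la I_{\omega'}\subset C_2$ whose hulls intersect non-trivially (choosing $\omega,\omega'$ long enough to make the lengths comparable and the hulls overlap, using that both diameters tend to $0$). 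The restrictions $C_1\cap I_\omega$ and $C_2\cap \la I_{\omega'}$ inherit the same thickness (thickness is defined as an infimum over all levels), so Astels' theorem applies to them and shows that their sum—hence $C_1+C_2$—contains an interval.

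The main obstacle I expect is exactly this positioning step: verifying that the linked-hulls hypothesis of Astels' theorem can be arranged for the pair $(\u_{N_j}(x),\la\u_{N_j}(x))$ for an arbitrary $\la\ne 0$. The numerical thickness bound is immediate from scale-invariance, but making the two sets overlap correctly (especially when $\la$ is very small or very large, so that $\la\u_{N_j}(x)$ has a very different size from $\u_{N_j}(x)$) requires carefully descending to matching levels of the defining constructions, which is precisely the manoeuvre that Astels' formulation is designed to accommodate.
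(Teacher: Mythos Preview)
Your approach is the same as the paper's: the paper simply records that Lemma \ref{l31} follows from \cite[Theorem 2.4]{Astels_2000}, exactly as you propose. One remark: the positioning/linked-hulls issue you flag as the main obstacle is already absorbed into Astels' statement, which asserts that $\la_1 C_1+\la_2 C_2$ contains an interval for \emph{any} nonzero $\la_1,\la_2$ once $\frac{\tau(C_1)}{1+\tau(C_1)}+\frac{\tau(C_2)}{1+\tau(C_2)}\ge 1$; the zoom-to-matching-bridges argument you sketch is precisely what Astels carries out in his proof, so you need not reproduce it here.
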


In view of the relation \eqref{22} and  Lemma \ref{l31}, {the algebraic sum $\u(x)+\la\u(x)$ containing an interval} will be proved if we find an index $j\ge 1$ such that $\tau(\u_{N_j}(x))\ge 1$.
For this we will compare the length of each  non-degenerate  interval $G_\omega$ with the lengths of its neighbors $I_{\omega 0}$ and $I_{\omega 1}$. 
We need three further lemmas; for the first one see also  \cite{Kong_2016}.

Henceforth we denote by $\varphi:=\frac{1+\sqrt{5}}{2}$ the Golden Ratio.
\begin{lemma}\label{l32}
We have  $\u(x)\subseteq(\varphi ,2]$ for all $x\in(0, 1]$.
\end{lemma}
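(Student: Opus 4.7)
The plan is to prove the contrapositive: for every $q\in(1,\varphi]$ and every $x\in(0,1]$, the number $x$ admits at least two distinct $q$-expansions, so $q\notin\u(x)$. The arithmetic driver is the equivalence $q\le\varphi\iff q^2\le q+1$, i.e.
\begin{equation*}
\sum_{i=2}^{\f}q^{-i}=\frac{1}{q(q-1)}\ge 1.
\end{equation*}
This slack is what enables digit rewriting in the tail of any expansion.

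First I would record the location estimate $1/(q-1)\ge 1/(\varphi-1)=\varphi>1\ge x>0$, valid for every $q\in(1,\varphi]$ and $x\in(0,1]$. In particular $x$ lies strictly in the interior of the representability interval $I_q=[0,1/(q-1)]$, which rules out the two trivial endpoint cases where uniqueness is automatic ($x=0$ with expansion $0^\f$ and $x=1/(q-1)$ with expansion $1^\f$).

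The heart of the argument is to exhibit a second $q$-expansion. The cleanest route is to appeal to the classical fact (going back to Erd\H{o}s--Jo\'o--Komornik, in the same circle of results as \cite{Erdos_Joo_1992}) that for every $q\in(1,\varphi]$ each interior point of $I_q$ has uncountably many $q$-expansions, from which $q\notin\u(x)$ follows immediately. For a self-contained derivation I would instead start from any $q$-expansion $(d_i)$ of $x$, select an index $k$ with $d_k=1$ (which exists since $x>0$), flip $d_k$ to $0$, and make up the deficit $q^{-k}$ by greedily filling the zero positions in the tail $i>k$. The tail inequality $\sum_{i>k}q^{-i}=q^{-k}/(q-1)\ge q^{-k}$ exactly guarantees enough room for this rewriting.

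The main technical obstacle in the direct route is to make sure the compensating tail is genuinely different from $(d_i)_{i>k}$ while remaining $\{0,1\}$-valued. This reduces to locating infinitely many indices $i>k$ with $d_i=0$, which is forced by the strict inequality $x<1/(q-1)$: otherwise $(d_i)$ would end in $1^\f$ and represent a value $\ge 1/(q-1)$. With that in hand the switch yields a legitimate second $q$-expansion of $x$, contradicting $q\in\u(x)$, and the lemma follows.
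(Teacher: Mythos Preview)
Your appeal to the classical fact that for $q\in(1,\varphi]$ only the endpoints of $I_q$ have unique expansions is exactly the paper's proof, which consists of that single sentence (plus the observation that neither endpoint lies in $(0,1]$).

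Your self-contained alternative, however, has a gap. The inequality you invoke, $\sum_{i>k}q^{-i}=q^{-k}/(q-1)\ge q^{-k}$, holds for every $q\in(1,2]$, so it cannot be where the hypothesis $q\le\varphi$ enters. More seriously, ``greedily filling the zero positions'' is not a valid mechanism for hitting the exact deficit $q^{-k}$: even if the zero positions past $k$ carry total weight at least $q^{-k}$, a subset-sum over a sparse index set need not realise a prescribed target. What you actually need after flipping $d_k$ to $0$ is a \emph{fresh} $\{0,1\}$-representation of $q^{-k}+\sum_{i>k}d_iq^{-i}$ at positions $>k$; this forces $\sum_{i>k}d_iq^{-i}\le q^{-k}(2-q)/(q-1)$, which fails for an arbitrary index $k$ with $d_k=1$.

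The clean direct argument uses your opening inequality $1/(q(q-1))\ge 1$ in a different way. It says that every $y\in[0,1]$ admits an expansion with first digit $0$ (since $qy\le q\le 1/(q-1)$), while every $y\in[1/q,1/(q-1)]$ admits one with first digit $1$. Given $x\in(0,1]$, take the minimal $m\ge 0$ with $q^m x\ge 1/q$; then $q^{m-1}x<1/q$ forces $q^m x<1$, so $q^m x\in[1/q,1]$ lies in the overlap and admits both leading digits. Prefixing $0^m$ yields two distinct $q$-expansions of $x$. This is where $q\le\varphi$ is genuinely used.
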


\begin{proof}
For $q\in(1, \varphi ]$ only the endpoints of $[0, 1/(q-1)]$ have unique expansions, and they are outside $(0, 1]$.
\end{proof}

Next we establish some  elementary inequalities.

\begin{lemma}\label{l33}
If the integers $m$ and $n$ are sufficiently large, then
\begin{equation*}
\left(1+\frac{1}{\varphi ^{m}}\right)^{2 m}<\frac{(110^\f)_2}{((10^{n-1})^\f)_{\varphi }}\qtq{and} \left(1+\frac{1}{\varphi ^{m}}\right)^{2 m}<\frac{((1^{n-1}0)^\f)_2}{((10^{n-3}10)^\f)_{\varphi }}.
\end{equation*}
\end{lemma}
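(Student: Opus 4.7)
The plan is to establish the two inequalities in parallel by showing that the left-hand side tends to $1$ as $m\to\infty$, while both right-hand sides tend to limits strictly greater than $1$ as $n\to\infty$; the lemma then follows by taking $m$ and $n$ large enough.

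First I would handle the LHS. Applying the elementary estimate $\log(1+t)\le t$ with $t=\varphi^{-m}$ gives
\[
\log\left(1+\frac{1}{\varphi^{m}}\right)^{2m}\le \frac{2m}{\varphi^{m}},
\]
and the right side tends to $0$ because $\varphi>1$. Hence $(1+\varphi^{-m})^{2m}\to 1$ as $m\to\infty$.

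Next I would compute the periodic expansions in closed form using the geometric series formula. A direct calculation gives $(110^\f)_2=3/4$ and $((1^{n-1}0)^\f)_2=(2^{n}-2)/(2^{n}-1)$, while
\[
((10^{n-1})^\f)_{\varphi}=\frac{\varphi^{n-1}}{\varphi^{n}-1}\qtq{and} ((10^{n-3}10)^\f)_{\varphi}=\frac{\varphi^{n-1}+\varphi}{\varphi^{n}-1}.
\]
As $n\to\infty$, both $\varphi$-base expressions tend to $1/\varphi$, while $((1^{n-1}0)^\f)_2\to 1$. Consequently,
\[
\frac{(110^\f)_2}{((10^{n-1})^\f)_{\varphi}}\to \frac{3\varphi}{4}\qtq{and}\frac{((1^{n-1}0)^\f)_2}{((10^{n-3}10)^\f)_{\varphi}}\to \varphi,
\]
and both limits exceed $1$: for $\varphi\approx 1.618$ this is clear, while $3\varphi/4>1$ reduces to $\sqrt{5}>5/3$, i.e., $45>25$.

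Fixing any $\eta\in(1,3\varphi/4)$, I would then choose $n_0$ so large that both ratios on the right exceed $\eta$ for every $n\ge n_0$, and $m_0$ so large that $(1+\varphi^{-m})^{2m}<\eta$ for every $m\ge m_0$; the two inequalities of the lemma then hold simultaneously for all $m\ge m_0$ and $n\ge n_0$. The argument is essentially bookkeeping with geometric series and one elementary limit estimate, so there is no real obstacle — the only point requiring a moment's care is the numerical verification that $3\varphi/4>1$, the smaller of the two limiting constants on the right.
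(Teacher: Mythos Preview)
Your proof is correct and follows essentially the same approach as the paper: show that the left-hand side tends to $1$ while each right-hand side tends to a constant strictly larger than $1$ (namely $3\varphi/4$ and $\varphi$), and conclude by choosing $m$ and $n$ large. The paper's proof records exactly these limits but without writing out the closed-form expressions or the numerical check $3\varphi/4>1$; your version is simply a more detailed execution of the same idea.
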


\begin{proof}
The lemma follows from the following relations:
\begin{align*}
&\lim_{m\ra\f}\left(1+\frac{1}{\varphi ^{m}}\right)^{2m}=1,\\
&\lim_{n\ra\f}((10^{n-1})^\f)_{\varphi }=\frac{1}{\varphi }<\frac{3}{4}=(110^\f)_2 
\intertext{and} 
&\lim_{n\ra\f}((10^{n-3}10)^\f)_{\varphi } =\frac{1}{\varphi }<1=\lim_{n\ra\f}((1^{n-1}0)^\f)_2. \qedhere
\end{align*}
\end{proof}
 
\begin{lemma}\label{l34}
Let $j\ge 1$ be sufficiently large. Then 
\begin{equation*}
| G_\omega|\le | I_{\om 0}|\qtq{and} | G_\omega|\le | I_{\omega 1}|
\end{equation*}
for all $\omega\in\Omega_j^*(x)$. 
\end{lemma}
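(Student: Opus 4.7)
The strategy is to translate the lengths $|G_\omega|, |I_{\omega 0}|, |I_{\omega 1}|$ into differences of explicit tail values via the mean value theorem and then bound the resulting ratios using the estimates of Lemma \ref{l33}.

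By Lemma \ref{l24} and the $0\leftrightarrow 1$ symmetry, we may assume $\omega \in \Omega_j^*(x)$ ends with $10^k$ for some $1\le k \le N_j-2$, so that $G_\omega \ne \emptyset$. Setting $n=|\omega|$, the endpoints $q_a<q_b<q_c<q_d$ of $I_{\omega 0}, G_\omega, I_{\omega 1}$ satisfy $R(q_\star)=T_\star(q_\star)$ where $R(q) := q^n x - \sum_{i=1}^n \omega_i q^{n-i}$, and summation of the geometric series in Lemma \ref{l23} yields
\[
T_a(q)=\tfrac{q^k}{q^{N_j}-1}, \quad T_b(q)=\tfrac{q^{N_j-1}-1}{(q-1)(q^{N_j}-1)}, \quad T_c(q)=\tfrac{q^{N_j-1}}{q^{N_j}-1}, \quad T_d(q)=\tfrac{1}{q-1}-\tfrac{1}{q^{N_j}-1}.
\]
Applying the mean value theorem to $R-T_a$ on $[q_a,q_b]$ and to $R-T_b$ on $[q_b,q_c]$ (both vanishing at the left endpoint), we obtain
\[
\frac{|G_\omega|}{|I_{\omega 0}|}=\frac{T_c(q_c)-T_b(q_c)}{T_b(q_b)-T_a(q_b)}\cdot\frac{(R-T_a)'(\xi_1)}{(R-T_b)'(\xi_2)}
\]
for some $\xi_1\in(q_a,q_b)$ and $\xi_2\in(q_b,q_c)$.

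The tail factor is estimated by a short computation:
\[
T_c-T_b=\frac{q^{N_j}-2q^{N_j-1}+1}{(q-1)(q^{N_j}-1)}, \qquad T_b-T_a=\frac{q^{N_j-1}-q^{k+1}+q^k-1}{(q-1)(q^{N_j}-1)}.
\]
The inequality $q_b<q_c$ forces $q_c$ into the region where $T_c(q_c)>T_b(q_c)$, i.e.\ $q_c$ close to $2$; thus the numerator of $T_c-T_b$ at $q_c$ is $O(1)$, while the numerator of $T_b-T_a$ is of order $\varphi^{N_j-1}$ uniformly in $k\in\{1,\ldots,N_j-2\}$ and $q\in(\varphi,2]$, yielding a tail factor of size $O(\varphi^{-N_j})$. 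For the derivative factor, $R'(q)$ is of order $nq^{n-1}$ and dominates the uniformly bounded $|T_a'|, |T_b'|$ when $n\ge M+N_j+1$, so the quotient is essentially $R'(\xi_1)/R'(\xi_2)\le(\xi_2/\xi_1)^{-(n-1)}$, a quantity of the form $(1+1/\varphi^m)^{2m}$ for suitable $m$. Lemma \ref{l33} is designed to match these two estimates: its inequality $(1+1/\varphi^m)^{2m}<(110^\infty)_2/((10^{n-1})^\infty)_\varphi$ shows that the derivative blow-up is dominated by the tail decay for $j$ (hence $N_j$) sufficiently large, giving $|G_\omega|/|I_{\omega 0}|\le 1$.

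The inequality $|G_\omega|\le|I_{\omega 1}|$ follows by the same argument with $T_d-T_c=q(q^{N_j-2}-1)/((q-1)(q^{N_j}-1))$ replacing $T_b-T_a$---a larger denominator---and the case $\omega$ ending in $01^k$ is handled symmetrically. The main technical obstacle is coupling the derivative ratio (sensitive to the unbounded exponent $n$) with the tail ratio (governed by the fixed $N_j$); it is precisely the condition $n>M+N_j$ from the definition of $\Omega_j(x)$ together with Lemma \ref{l33} that makes this balance work in our favour once $j$ is large enough.
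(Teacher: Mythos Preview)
Your approach via the mean value theorem is genuinely different from the paper's, and conceptually attractive, but the sketch has a real gap in the control of the derivative factor.

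The paper does not use the MVT. Instead it obtains, for general $\omega\in\Omega_j^*(x)$ (without splitting into the $10^k$/$01^k$ cases), separate upper and lower bounds for each of $q_2-q_1$, $q_3-q_2$, $q_4-q_3$ by manipulating the identity $(\omega\,\cdot\,)_q=x$ directly. Crucially, the \emph{upper} bounds $q_2-q_1\le q_2^{-(n-M-1)}$ and $q_4-q_3\le q_4^{-(n-M-1)}$ are obtained first and independently; these are then fed back into the lower-bound arguments to control the change-of-base factors $(q_2/q_1)^{n+1}$ and $(q_4/q_3)^{n+2}$, which is exactly where Lemma~\ref{l33} enters.

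In your decomposition, the analogue of this change-of-base factor is the derivative ratio $(R-T_a)'(\xi_1)/(R-T_b)'(\xi_2)$. Your assertion that this is ``essentially $R'(\xi_1)/R'(\xi_2)\le(\xi_1/\xi_2)^{n-1}$, a quantity of the form $(1+1/\varphi^m)^{2m}$'' is the heart of the matter, and it is not justified:
\begin{itemize}
\item To pass from $(R-T_\star)'$ to $R'$ you need a \emph{lower} bound on $R'$ over $[q_a,q_c]$, not just the upper bound ``order $nq^{n-1}$''. This lower bound is not automatic; one route is the identity $R'(q)=\tfrac{n}{q}R(q)+\sum_i i\omega_i q^{n-i-1}$ together with Lemma~\ref{l21}(ii), but you would still need $R(q)\ge 0$ on the whole gap $G_\omega$, which you have not shown.
\item Even granting $R'(\xi_1)/R'(\xi_2)\le(\xi_1/\xi_2)^{n-1}$, identifying this with $(1+1/\varphi^m)^{2m}$ requires an a~priori bound $\xi_2-\xi_1=O(\varphi^{-(n-M)})$, i.e.\ a bound on $|I_\omega|$ of the right order in $n$. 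That is precisely what the paper establishes \emph{first} via its upper-bound step; you have not established it, so invoking Lemma~\ref{l33} at this point is circular.
\end{itemize}
If instead you try crude bounds ($R'\le Cnq^{n-1}$ above, $R'\ge c\,q^{n-M-2}$ below), the derivative ratio picks up a factor of order $n$, which the tail factor $O(\varphi^{-N_j})$ cannot absorb uniformly in $n$ for fixed $j$.

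Finally, the ``$0\leftrightarrow 1$ symmetry'' you invoke is not a symmetry of the problem: flipping digits does not correspond to any symmetry of the map $q\mapsto\Phi_x(q)$, so the $01^k$ case genuinely needs its own treatment (the paper handles both endings simultaneously via the two-sided inequalities in \eqref{31}).
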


\begin{proof}
Fix    $\omega\in\Omega_j^*(x)$ of length $n(>M+N_j)$. Writing
\begin{equation*}
I_{\omega 0}=[q_1, q_2] \qtq{and} I_{\omega 1}=[q_3, q_4]
\end{equation*}
we have to prove the inequalities
\begin{equation*}
q_3-q_2\le q_2-q_1\qtq{and} q_3-q_2\le q_4-q_3
\end{equation*}
for some large integer $j$.
By Lemma \ref{l23} it follows that 
\begin{equation}\label{31}
\begin{split}
&\omega (0^{N_j-1}1)^\f\lle \Phi_x(q_1)\lle \omega 0(10^{N_j-1})^\f,\quad \Phi_x(q_2)=\omega 0(1^{N_j-1} 0)^\f;\\
&\omega 1(01^{N_j-1})^\f\lle \Phi_x(q_4)\lle \omega (1^{N_j-1}0)^\f,\quad \Phi_x(q_3)=\omega 1(0^{N_j-1}1)^\f.
\end{split}
\end{equation}
We emphasize by Lemma \ref{l25} that $q_i\in\u_{N_j}(x)$ for all $1\le i\le 4$.
\medskip 

\emph{Bounds on  $q_2-q_1$.}
First we give an \emph{upper} {bound of $q_2-q_1$}. 
It follows from \eqref{31} that 
\begin{equation*}
(\omega(01^{N_j-1})^\f)_{q_2}=x\ge(\omega(0^{N_j-1}1)^\f)_{q_1}{,}
\end{equation*}
whence
\begin{equation*}
(0^n(01^{N_j-1})^\f)_{q_2}-(0^n(0^{N_j-1}1)^\f)_{q_1}\ge (\om 0^\f)_{q_1}-(\om 0^\f)_{q_2}.
\end{equation*}
Since $\omega=\om_1\cdots \om_n$ contains a non-zero digit $\om_\ell=1$ for some  $1\le\ell\le M+1$ by Lemma \ref{l21} (ii), the right hand side may be minorized as follows:
\begin{equation*}
(\om 0^\f)_{q_1}-(\om 0^\f)_{q_2}
\ge\frac{1}{q_1^{\ell}}-\frac{1}{q_2^{\ell}}
\ge\frac{1}{q_1q_2^{\ell-1}}-\frac{1}{q_2^\ell}=\frac{q_2-q_1}{q_1q_2^\ell}\ge \frac{q_2-q_1}{q_2^{M+2}}.
\end{equation*}
Combining the two estimates and using  Lemma \ref{l21} (iii) we conclude that 
\begin{equation}\label{32}
\begin{split}
q_2-q_1&\le q_2^{M+2}\left((0^n(01^{N_j-1})^\f)_{q_2}-(0^n(0^{N_j-1}1)^\f)_{q_1}\right)\\
&\le q_2^{M+2} (0^n(01^{N_j-1})^\f)_{q_2}\;\le \frac{q_2^{M+2}}{q_2^{n+1}}= \frac{1}{q_2^{n-M-1}}.
\end{split}
\end{equation}

Now we focus on the lower bound of $q_2-q_1$.
We infer from \eqref{31} that
\begin{equation*}
(\omega 0(1^{N_j-1}0)^\f)_{q_2}=x\le (\omega 0(1 0^{N_j-1})^\f)_{q_1},
\end{equation*}
and this implies the estimate 
\begin{align*}
(0^{n+1}(1^{N_j-1}0)^\f)_{q_2}-(0^{n+1}(10^{N_j-1})^\f)_{q_1}&\le (\omega 0^\f)_{q_1}-(\omega 0^\f)_{q_2}\\
&\le \sum_{i=1}^\f\left(\frac{1}{q_1^i}-\frac{1}{q_2^i}\right)=\frac{q_2-q_1}{(q_1-1)(q_2-1)}.
\end{align*}
Choosing by Lemma \ref{l33} a large integer  $j_0\ge 1$ such that
\begin{equation}\label{33}
 N_j\ge 4\qtq{and}
\left(1+\frac{1}{\varphi ^{n-M}}\right)^{n+1}<\frac{(110^\f)_2}{((10^{N_j-1})^\f)_{\varphi }}
\end{equation}
for all $j\ge j_0$ and $n>M+N_j$, we deduce from the above estimate for all $j\ge j_0$ that
\begin{equation}\label{34}
\begin{split}
q_2-q_1&\ge (\varphi -1)^2\left((0^{n+1}(1^{N_j-1}0)^\f)_{q_2}-(0^{n+1}(10^{N_j-1})^\f)_{q_1}\right)\\
&\ge (\varphi -1)^2\left((0^{n+1}(1^{N_j-1}0)^\f)_{q_2}-(0^{n+1}110^\f)_{q_2}\right) \\
&\ge \frac{(\varphi -1)^2}{q_2^{n+4}}.
\end{split}
\end{equation}
Here the first inequality holds because $q_2>q_1\ge \varphi $ by Lemma \ref{l32} and the last inequality holds because $N_j\ge 4$.
The crucial second inequality follows by \eqref{32}, \eqref{33}  and the inequality $q_2>q_1\ge \varphi $:
\begin{align*}
(0^{n+1}(10^{N_j-1})^\f)_{q_1}&=\left(\frac{q_2}{q_1}\right)^{n+1}\frac{((10^{N_j-1})^\f)_{q_1}}{q_2^{n+1}}\\
&\le\left(1+\frac{q_2-q_1}{q_1}\right)^{n+1}\frac{((10^{N_j-1})^\f)_{\varphi }}{q_2^{n+1}}\\
&\le\left(1+\frac{1}{q_1q_2^{n-M-1}}\right)^{n+1}\frac{((10^{N_j-1})^\f)_{\varphi }}{q_2^{n+1}}\\
&\le \left(1+\frac{1}{\varphi ^{n-M}}\right)^{n+1}\frac{((10^{N_j-1})^\f)_{\varphi }}{q_2^{n+1}}\\
&<\frac{(110^\f)_{2}}{q_2^{n+1}}\le\; (0^{n+1}110^\f)_{q_2}.
\end{align*}
\medskip 

\emph{Bounds on $q_4-q_3$.}
We adapt the above arguments for $q_2-q_1$.
First we give an upper bound of $q_4-q_3$. 
We infer from \eqref{31}  that 
\begin{equation*}
(\om 1(0^{N_j-1}1)^\f)_{q_3}=x\le(\om (1^{N_j-1}0)^\f)_{q_4}.
\end{equation*}
Since there exists $1\le \ell\le M+1$ such that $\om_\ell=1$ by Lemma \ref{l21} (ii), it follows that 
\begin{align*}
(0^{n+1}(1^{N_j-2}01)^\f)_{q_4}-(0^{n+1}(0^{N_j-1}1)^\f)_{q_3}&\ge(\om 10^\f)_{q_3}-(\om 10^\f)_{q_4}\\
&\ge\frac{1}{q_3^\ell}-\frac{1}{q_4^\ell}\ge\frac{q_4-q_3}{q_4^{M+2}}.
\end{align*}
This implies that 
\begin{equation}\label{35}
\begin{split}
q_4-q_3&\le q_4^{M+2}\left((0^{n+1}(1^{N_j-2}01)^\f)_{q_4}-(0^{n+1}(0^{N_j-1}1)^\f)_{q_3}\right)\\
&\le q_4^{M+2}(0^{n+1}(1^{N_j-2}01)^\f)_{q_4} \;
 \le \frac{q_4^{M+2}}{q_4^{n+1}}=\frac{1}{q_4^{n-M-1}},
\end{split}
\end{equation}
where the third inequality follows by Lemma \ref{l21} (iii) because $q_4\in\u_{N_j}(x)$.

Now we seek a lower bound of $q_4-q_3$. 
By Lemma \ref{l33} there exists $j_1\ge j_0$ (we use $j_0$ chosen in the first part of the proof) such that 
\begin{equation}\label{36}
\left(1+\frac{1}{\varphi ^{n-M}}\right)^{n+2}<\frac{((1^{N_j-1}0)^\f)_{2}}{((10^{N_j-3}10)^\f)_{\varphi }}
\end{equation}
for all $j\ge j_1$ and  $n>M+N_j$.
By \eqref{31} we have
\begin{equation*}
(\omega 1(0^{N_j-1}1)^\f)_{q_3}=x\ge (\omega 1(01^{N_j-1})^\f)_{q_4},
\end{equation*}
whence
\begin{align*}
(0^{n+1}(01^{N_j-1})^\f)_{q_4}-(0^{n+1}(0^{N_j-1}1)^\f)_{q_3}&\le (\omega 10^\f)_{q_3}-(\omega 10^\f)_{q_4}\\
&\le \sum_{i=1}^\f\left(\frac{1}{q_3^i}-\frac{1}{q_4^i}\right)=\frac{q_4-q_3}{(q_4-1)(q_3-1)}.
\end{align*}
Since $q_4> q_3\ge \varphi $ by Lemma \ref{l32}, hence we deduce the following  estimate of $q_4-q_3$ for all $j\ge j_1$:
\begin{equation}
\label{37}
\begin{split}
q_4-q_3&\ge (\varphi -1)^2\left((0^{n+1}(01^{N_j-1})^\f)_{q_4}-(0^{n+1}(0^{N_j-1}1)^\f)_{q_3}\right)\\
&\ge (\varphi -1)^2\left((0^{n+1}(010^{N_j-3}1)^\f)_{q_3}-(0^{n+1}(0^{N_j-1}1)^\f)_{q_3}\right)\\
&\ge \frac{(\varphi -1)^2}{q_3^{n+3}}.
\end{split}
\end{equation}
Here the crucial second inequality follows from \eqref{35} and \eqref{36}:
\begin{align*}
(0^{n+1}(010^{N_j-3}1)^\f)_{q_3}&=\left(\frac{q_4}{q_3}\right)^{n+2}\frac{((10^{N_j-3}10)^\f)_{q_3}}{q_4^{n+2}}\\
&\le\left(1+\frac{q_4-q_3}{q_3}\right)^{n+2}\frac{((10^{N_j-3}10)^\f)_{\varphi }}{q_4^{n+2}}\\
&\le\left(1+\frac{1}{q_3q_4^{n-M-1}}\right)^{n+2}\frac{((10^{N_j-3}10)^\f)_{\varphi }}{q_4^{n+2}}\\
&\le\left(1+\frac{1}{\varphi ^{n-M}}\right)^{n+2}\frac{((10^{N_j-3}10)^\f)_{\varphi }}{q_4^{n+2}}\\
&<\frac{((1^{N_j-1}0)^\f)_2}{q_4^{n+2}}\le\;(0^{n+1}(01^{N_j-1})^\f)_{q_4}.
\end{align*}
\medskip 

\emph{Bounds on $q_3-q_2$.}
Note that
\begin{equation*}
(\omega 0(1^{N_j-1}0)^\f)_{q_2}=x=(\omega 1(0^{N_j-1}1)^\f)_{q_3}
\end{equation*}
by \eqref{31}.
Since there exists $1\le\ell\le M+1$ such that $\om_\ell=1$ by Lemma \ref{l21} (ii), it follows that
\begin{equation*}
(0^n1(0^{N_j-1}1)^\f)_{q_3}-(0^n 0(1^{N_j-1}0)^\f)_{q_2}
=(\omega 0^\f)_{q_2}-(\omega 0^\f)_{q_3}
\ge \frac{1}{q_2^{\ell}}-\frac{1}{q_3^{\ell}} \ge \frac{q_3-q_2}{ q_3^{M+2}}.
\end{equation*}
Using the inequalities $q_2<q_3\le 2$ hence we infer that
\begin{equation}
\label{38}
\begin{split}
q_3-q_2&\le 2^{M+2}\left((0^n 1(0^{N_j-1}1)^\f)_{q_3}-(0^n 0(1^{N_j-1}0)^\f)_{q_2}\right)\\
&\le 2^{M+2}\left((0^n1(0^{N_j-1}1)^\f)_{q_3}-(0^n0(1^{N_j-1}0)^\f)_{q_3}\right)\\
&\le 2^{M+2}\left((0^n0 1^{N_j-1}4 0^\f)_{q_3}-(0^n 0 1^{N_j-1}0^\f)_{q_3}\right)\\
&=\frac{ 2^{M+4}}{q_3^{n+N_j+1}}{\color{blue}.}
\end{split}
\end{equation}
{Here the crucial  third inequality follows by
\[
(0^n1(0^{N_j-1}1)^\f)_{q_3}<(0^{n+1}(1^{N_j-1}2)^\f)_{q_3}
\]
and the estimate
\begin{align*}
((1^{N_j-1}2)^\f)_{q_3}&=\frac{(1^{N_j-1}20^\f)_{q_3}}{1-q_3^{-N_j}}\le \frac{1+q_3^{-N_j}}{1-q_3^{-N_j}}\le \frac{1+\varphi^{-N_j}}{1-\varphi^{-N_j}}\;\le 2,
\end{align*}
using that
$(1^{N_j}0^\f)_{q_3}\le 1$, $q_3\ge \varphi$ and $N_j\ge 3$.
}

Since $1<q_2<q_3$, we may  choose $j_2\ge j_1$ such that
\begin{equation*}
2^{M+4}\le (\varphi -1)^2q_2^{N_{j_2}-3}\le (\varphi -1)^2q_3^{N_{j_2}-2}.
\end{equation*}
(The second inequality automatically follows from the first one.)
Then, using also the relations \eqref{34} and \eqref{38}, the following estimate holds for all $j\ge j_2$:
\begin{equation*}
q_3-q_2\le \frac{2^{M+4}}{q_3^{n+N_j+1}}<\frac{2^{M+4}}{q_2^{n+N_j+1}}\le \frac{(\varphi -1)^2}{q_2^{n+4}}\le q_2-q_1.
\end{equation*}
Similarly, using \eqref{37} and \eqref{38} we obtain that
\begin{equation*}
q_3-q_2\le \frac{2^{M+4}}{q_3^{n+N_j+1}}<\frac{(\varphi -1)^2}{q_3^{n+3}}\le q_4-q_3
\end{equation*}
for all $j\ge j_2$.  
Since the word $\om$ was taken arbitrarily from $\Omega_j^*(x)$, this completes the proof.
\end{proof}

{Now we consider  the algebraic product  part of Theorem \ref{t11}. By Lemma \ref{l32} we have $\u(x)\subset(\varphi, 2]$ for each $x\in(0,1]$. Then 
\[
\u(x)\cdot\u(x)^\la=\set{pq^\la: p,q\in\u(x)}=\set{e^{\ln p+\la\ln q}: p, q\in\u(x) }.
\]
So,  the algebraic product $\u(x)\cdot\u(x)^\la$ containing an interval is equivalent to that the algebraic sum $\ln\u(x)+\la\ln\u(x)$ contains an interval, where $\ln\u(x):=\set{\ln q: q\in\u(x)}$. Observe by Lemma \ref{l25} that for any $x\in(0, 1]$ and  any $j\ge 1$ the set $\u_{N_j}(x)$ is a Cantor subset of $\u(x)$. This implies that $\ln\u_{N_j}(x)$ is also a Cantor subset of $\ln\u(x)$. Combining this with Lemma \ref{l31} on the thickness we obtain the following
\begin{lemma}
\label{l35}
For any given $x\in(0, 1]$, if $\tau\big(\ln\u_{N_j}(x)\big)\ge 1$ for some $j\ge 1$, then  $\u_{N_j}(x)\cdot \u_{N_j}(x)^\la$ contains an interval for each non-zero real number $\la$.  
\end{lemma}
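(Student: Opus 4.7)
The plan is to reduce the multiplicative statement to an additive one by taking logarithms, and then to invoke the same thickness machinery that powers the algebraic-sum half of Theorem \ref{t11}. This should go through smoothly because $\u_{N_j}(x)\subseteq\u(x)\subset(\varphi,2]$ by Lemma \ref{l32}, so $\ln$ acts as a $C^\infty$-diffeomorphism on a neighbourhood of $\u_{N_j}(x)$ and therefore preserves all the Cantor-set structure provided by Lemma \ref{l25}.

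The key steps I would carry out are the following. First, transport the gap decomposition of $\u_{N_j}(x)$ given in Lemma \ref{l25} through the logarithm: the set $\ln\u_{N_j}(x)$ is obtained from the closed interval $\ln I_{x_1\cdots x_{M+N_j}0}$ by successively removing the pairwise disjoint open gaps $\ln G_\omega$ with $\omega\in\Omega_j^*(x)$, and the thickness appearing in the hypothesis of the lemma is defined exactly as in Definition \ref{def:thickness} but with respect to these logged basic and gap intervals. Second, since the theorem of Astels \cite[Theorem 2.4]{Astels_2000} that underlies Lemma \ref{l31} is a general statement valid for any Cantor subset of $\R$ whose thickness is at least one, I would apply it directly to $\ln\u_{N_j}(x)$ under the assumption $\tau(\ln\u_{N_j}(x))\ge 1$, obtaining that
\begin{equation*}
\ln\u_{N_j}(x) + \la\ln\u_{N_j}(x)
\end{equation*}
contains a non-degenerate interval for every $\la\ne 0$. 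Third, I would close the argument with the identity
\begin{equation*}
\u_{N_j}(x)\cdot\u_{N_j}(x)^\la = \exp\bigl(\ln\u_{N_j}(x) + \la\ln\u_{N_j}(x)\bigr)
\end{equation*}
together with the fact that $\exp$ sends intervals to intervals, which immediately yields the desired conclusion.

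I do not expect any genuine obstacle in this chain of reasoning. The only small point that merits a line of verification is that Astels's theorem applies verbatim to $\ln\u_{N_j}(x)$, but this is transparent: thickness is an intrinsic invariant of a Cantor subset of $\R$, the hypothesis of the lemma is already phrased in terms of $\tau(\ln\u_{N_j}(x))$ rather than $\tau(\u_{N_j}(x))$, and the logged gap decomposition inherited from Lemma \ref{l25} makes $\ln\u_{N_j}(x)$ a bona fide Cantor set to which Astels's criterion can be applied without modification.
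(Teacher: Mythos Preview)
Your proposal is correct and follows essentially the same route as the paper: the paper also reduces the product to a sum via $\u_{N_j}(x)\cdot\u_{N_j}(x)^\la=\exp\bigl(\ln\u_{N_j}(x)+\la\ln\u_{N_j}(x)\bigr)$, observes (using Lemma~\ref{l32} and Lemma~\ref{l25}) that $\ln\u_{N_j}(x)$ is a Cantor set, and then invokes the Astels thickness criterion behind Lemma~\ref{l31}. Your write-up is in fact slightly more explicit than the paper's about transporting the gap decomposition through the logarithm, but the underlying argument is the same.
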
}
\begin{proof}[Proof of Theorem \ref{t11}]
{Fix $x\in(0, 1]$ and $\la\ne 0$ arbitrarily. By Lemmas \ref{l31} and \ref{l34} it follows that the algebraic sum $\u(x)+\la\u(x)$ contains an interval.  As for the algebraic product $\u(x)\cdot \u(x)^\la$}
it suffices to show that $\tau\big(\ln\u_{N_j}(x)\big)\ge 1$ if $j$ is sufficiently large.
Indeed, then the theorem will follow from Lemma \ref{l35} because of the inclusion \eqref{22}.

Fix $\om\in\Omega^*_j(x)$ arbitrarily, of length $n(>M+N_j)$, and consider the intervals 
\begin{equation*}
I_{\om 0}=[q_1, q_2],\quad   I_{\om 1}=[q_3, q_4]\qtq{and}G_\om=(q_2, q_3)
\end{equation*}
as in the proof of Lemma \ref{l34}. 
Then the corresponding basic intervals of level $n+1$ of $\ln(\u_{N_j}(x))$ are 
\begin{equation*}
\ln( I_{\om 0}):=[\ln q_1, \ln q_2],\quad \ln( I_{\om 1}):=[\ln q_3, \ln q_4]\qtq{and}\ln(G_\om):=(\ln q_2, \ln q_3).
\end{equation*}
We have to prove that if $j$ is sufficiently large, then
\begin{equation*}
\ln q_3-\ln q_2\le \ln q_2-\ln q_1\qtq{and}\ln q_3-\ln q_2\le \ln q_4-\ln q_3,
\end{equation*}
or equivalently 
\begin{equation}\label{39}
\frac{q_3}{q_2}\le \frac{q_2}{q_1}\qtq{and}\frac{q_3}{q_2}\le \frac{q_4}{q_3}.
\end{equation}

We use the estimates obtained in the proof of {Lemma \ref{l34}}.
If $j\ge j_2$, then we infer from \eqref{34} and \eqref{38} the relations
\begin{align*}
\frac{q_2}{q_1}&\ge 1+\frac{(\varphi -1)^2}{q_1q_2^{n+4}}\ge1+\frac{(\varphi -1)^2}{q_2^{n+5}},\\
\frac{q_3}{q_2}&\le 1+\frac{2^{M+4}}{q_2q_3^{n+N_j+1}}\le 1+\frac{2^{M+4}}{q_2^{n+N_j+2}}.
\end{align*}
Hence there exists $j_3\ge j_2$ such that 
\begin{equation*}
\frac{q_3}{q_2}\le 1+\frac{2^{M+4}}{q_2^{n+N_j+2}}<1+\frac{(\varphi -1)^2}{q_2^{n+5}}\le \frac{q_2}{q_1}
\end{equation*}
for all $j\ge j_3$, establishing the first inequality in \eqref{39}.

Similarly, we deduce from \eqref{37} and \eqref{38}  that 
\begin{align*}
\frac{q_4}{q_3}&\ge1+\frac{(\varphi -1)^2}{q_3^{n+4}}\qtq{and}
\frac{q_3}{q_2} \le 1+\frac{2^{M+4}}{q_2 q_3^{n+N_j+1}}
\end{align*}
for all $j\ge j_2$. 
Hence, there exists $j_4\ge j_3$ such that 
\begin{equation*}
\frac{q_3}{q_2}\le 1+\frac{2^{M+4}}{q_2q_3^{n+N_j+1}}<1+\frac{(\varphi -1)^2}{q_3^{n+4}}\le \frac{q_4}{q_3}
\end{equation*}
for all $j\ge j_4$.
This proves the second inequality in \eqref{39}.
\end{proof}

\section{Proof of Theorem \ref{t41}}\label{s4}

{In this section we apply the symbolic Cantor sets constructed in Section \ref{s2} to the set $\n$ of non-matching parameters, and we  prove Theorem \ref{t41}.}
In order to describe the non-matching set $\n$ we recall the doubling map $D$ on the unit circle $[0, 1)$ {defined by}
\begin{equation*}
D: [0, 1)\ra[0, 1);\qquad x\mapsto 2x\pmod 1.
\end{equation*}
The following characterization of $\n$ was implicitly given by \cite{Daj-Kal-07}.

\begin{lemma}
\label{l42}
The following statements are equivalent:
\begin{enumerate}[\upshape (i)]
\item $\al\in\n$.
\item  For all $n\ge 0$ we have 
\[D^n\left(\frac{1}{\al}\right)\notin\left(\frac{1}{2\al}, 1-\frac{1}{2\al}\right).\]
\item {$1/\alpha\in[1/2, 1]$ has a unique dyadic expansion $(a_i)\in\set{0,1}^\N$ satisfying  
\begin{equation}\label{41}
\left\{\begin{array}{lll}
a_{n+1}a_{n+2}\cdots\lle a_1a_2\cdots&\quad\text{if}& a_n=0,\\
a_{n+1}a_{n+2}\cdots\lge (1-a_1)(1-a_2)\cdots&\quad\text{if}& a_n=1
\end{array}\right.
\end{equation}
for all $n\ge 1$.
}
\end{enumerate}
\end{lemma}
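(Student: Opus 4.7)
The plan is to prove the chain (i) $\Leftrightarrow$ (ii) $\Leftrightarrow$ (iii) through a dynamical step followed by a symbolic translation.

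For (i) $\Leftrightarrow$ (ii), I would first argue by induction on $k$ that as long as matching has not occurred by step $k$, the gap $S_\al^k(1)-S_\al^k(1-\al)$ equals $\al$. The initial gap is indeed $\al$, and a case analysis on the three branches of $S_\al$ shows that, except in the configuration where $S_\al^k(1)\in(1/2,\al-1/2)$ and $S_\al^k(1-\al)\in(-\al+1/2,-1/2)$---in which case matching occurs at step $k+1$---the gap $\al$ is preserved under $S_\al$. Since the two conditions defining this exceptional configuration coincide (as the gap is $\al$), matching occurs at step $k+1$ exactly when $S_\al^k(1)\in(1/2,\al-1/2)$. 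Thus $\al\in\n$ if and only if $S_\al^k(1)\notin (1/2,\al-1/2)$ for every $k\ge 0$.

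Next I would transfer this to the doubling orbit of $1/\al$. Because $S_\al$ acts with slope $2$ on each branch and shifts by $\pm\al$ or $0$, a direct induction gives $S_\al^n(1)=2^n-\al N_n$ for some integer $N_n$, so that $S_\al^n(1)/\al=2^n/\al-N_n$. Combining this with $D^n(1/\al)=\set{2^n/\al}$ (fractional part), a case distinction on the sign of $S_\al^n(1)$ produces $D^n(1/\al)=S_\al^n(1)/\al$ when $S_\al^n(1)\ge 0$ and $D^n(1/\al)=1+S_\al^n(1)/\al$ when $S_\al^n(1)< 0$. Under this dictionary the forbidden interval $(1/2,\al-1/2)$ for $S_\al^n(1)$ transforms exactly into $(1/(2\al),1-1/(2\al))$ for $D^n(1/\al)$, establishing (i) $\Leftrightarrow$ (ii).

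For (ii) $\Leftrightarrow$ (iii) I would work purely symbolically. Write $1/\al=\sum_{i\ge 1}a_i/2^i$. Then $D^n(1/\al)$ has binary tail $a_{n+1}a_{n+2}\cdots$, while $1/(2\al)$ and $1-1/(2\al)$ have the dyadic expansions $0\,a_1a_2\cdots$ and $1\,(1-a_1)(1-a_2)\cdots$, respectively. Comparing lexicographically, the requirement $D^n(1/\al)\notin(1/(2\al),1-1/(2\al))$ splits into two cases: either $a_{n+1}=0$ and $a_{n+2}a_{n+3}\cdots\lle a_1a_2\cdots$, or $a_{n+1}=1$ and $a_{n+2}a_{n+3}\cdots\lge (1-a_1)(1-a_2)\cdots$. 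After the reindexing $n\mapsto n-1$, these are precisely the two alternatives in \eqref{41}. For the uniqueness claim, I would observe that if $1/\al\ne 1$ were a dyadic rational, then both of its two dyadic expansions fail \eqref{41} at the index where they first differ; hence the expansion satisfying \eqref{41}, when one exists, is automatically unique.

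The hard part will be verifying the two dictionaries rigorously and checking that the boundary cases line up across (i), (ii), and (iii). In particular, I must confirm that the sign dichotomy expressing $D^n(1/\al)$ in terms of $S_\al^n(1)$ transitions consistently whenever the orbit of $1$ crosses between branches of $S_\al$, and that the dyadic-rational exceptions for $1/\al$ correspond precisely to matching parameters in all three characterizations. Some bookkeeping will also be needed to separate the input imported from \cite{Daj-Kal-07} from the new symbolic translation.
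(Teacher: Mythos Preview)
Your proposal is correct. For (ii)$\Leftrightarrow$(iii) you follow essentially the same symbolic translation as the paper, with only a cosmetic difference in how uniqueness of the dyadic expansion is argued: the paper observes directly that a greedy tail $10^\infty$ would force $D^n(1/\alpha)=1/2$ into the forbidden open interval, while you instead check that both dyadic expansions of a dyadic-rational $1/\alpha$ violate \eqref{41}.

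The genuine difference is in (i)$\Leftrightarrow$(ii). The paper simply cites \cite{Daj-Kal-07} for this equivalence, whereas you reconstruct the argument from scratch by tracking the gap $S_\alpha^k(1)-S_\alpha^k(1-\alpha)$ and then conjugating the $S_\alpha$-orbit of $1$ to the doubling orbit of $1/\alpha$ via $S_\alpha^n(1)=2^n-\alpha N_n$. Your route is more self-contained and explains \emph{why} the matching dichotomy is governed by the doubling map, at the price of the branch-by-branch bookkeeping you anticipate. One point worth making explicit when you write it out: the dictionary between $S_\alpha^n(1)$ and $D^n(1/\alpha)$ works cleanly because, as long as matching has not occurred, the gap being $\alpha$ together with $S_\alpha^n(1-\alpha)\ge -1$ forces $S_\alpha^n(1)\ge\alpha-1\ge 0$; hence only the non-negative branch of your sign dichotomy is ever invoked, and this is precisely what makes the forbidden interval $(1/2,\alpha-1/2)$ for $S_\alpha^n(1)$ correspond exactly to $(1/(2\alpha),1-1/(2\alpha))$ for $D^n(1/\alpha)$.
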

{\begin{proof}
The equivalence of (i) and (ii) follows from \cite{Daj-Kal-07}. As for (iii) $\Rightarrow$ (ii),  let $(a_i)$ be the unique dyadic expansion of $1/\alpha$. Then $(1-a_i)$ is   the unique dyadic expansion of $1-1/\alpha$. Hence, (ii) follows from (\ref{41}).

 To prove (ii) $\Rightarrow$ (iii), we first observe that the greedy dyadic expansion $(a_i)$ of $1/\alpha$ {cannot} end with $10^\f$, {for} otherwise there must exist $n\ge 0$ such that 
\[D^n\left(\frac{1}{\alpha}\right)=\frac{1}{2}\in\left(\frac{1}{2\alpha}, 1-\frac{1}{2\alpha}\right).\]
Hence, $1/\alpha$ has a unique dyadic expansion $(a_i)$. Furthermore, (4.1) follows from the   following observation: for each $n\ge 1$,
\[
D^{n-1}\left(\frac{1}{\alpha}\right)\le \frac{1}{2\alpha}\quad\Longleftrightarrow \quad a_{n}=0~\textrm{ and }~a_{n+1}a_{n+2}\ldots \lle a_1a_2\ldots
\]
and 
\[
D^{n-1}\left(\frac{1}{\alpha}\right)\ge 1-\frac{1}{2\alpha}\quad\Longleftrightarrow \quad a_{n}=1~\textrm{ and }~a_{n+1}a_{n+2}\ldots \lge (1-a_1)(1-a_2)\ldots. \qedhere
\]
\end{proof}
 
Let $\ns$ be the set of all sequences $(a_i)\in\set{0, 1}^\N$ such that it is the unique dyadic expansion of $((a_i))_2\in[1/2, 1]$ and it satisfies  the inequalities in \eqref{41}.} Then by Lemma \ref{l42} it follows that the projection map
\begin{equation*}
\Psi: \ns\ra\n;\qquad (a_i)\mapsto\frac{1}{((a_i))_2}
\end{equation*}
is well-defined. 
Indeed, $\Psi$ is bijective and  strictly decreasing. 
Motivated by the symbolic Cantor sets constructed in Section \ref{s2}, we will construct the symbolic Cantor subsets $\ns_m$ contained in $\ns$, such that the thickness of $\Psi(\ns_m)$ is larger than $1$.   

Given {an} integer $m\ge {3}$, let $\ns_m$ be the set of sequences $(a_i)\in\set{0, 1}^\N$ satisfying
\begin{equation*}
a_1\cdots a_m=1^m\qtq{and} a_{n+1}\cdots a_{n+m}\notin\set{0^m, 1^m}
\end{equation*}
for all $n\ge m$. {Then each sequence $(a_i)\in\ns_m$ satisfies (\ref{41}) and   ends with neither $01^\f$ nor $10^\f$. Hence, by Lemma \ref{l42} it follows that}
\begin{equation*}
\ns_m\subseteq\ns\qtq{for all} m\ge {3}.
\end{equation*}
By an analogous argument as in Lemmas \ref{l23}--\ref{l25}, the set $\ns_m$ is indeed a symbolic Cantor set and has a similar structure as $\us_{N_j}(x)$ as described in Section \ref{s2}. 
Write  $\n_m:=\Psi(\ns_m)$. 
By Lemma \ref{l42} it follows that $\n_m\subset\n$ for all $m\ge {3}$. 
Therefore it suffices to prove the thickness $\tau(\n_m)\ge 1$ for some large integer $m$. 

In contrast with the definitions of the set $\Omega_j(x)$ of finite words and the symbolic  intervals $\I_\om$ in Section \ref{s2}, we introduce the following notation.
For $m\ge {3}$, let $\Omega(\ns_m)$ be the set of all finite initial words of length larger than $m$ occurring in $\ns_m$. 
Given a  word  $\om\in\Omega(\ns_m)$, let $\J_\om$ be the smallest symbolic  interval containing all sequences of $\ns_m$ that begin with $\om$. 
{Similarly} to Lemma \ref{l23}, one can verify that  the  interval $\J_\om$ has the form $\J_\om=[(a_i), (b_i)]$ with  $(a_i), (b_i)\in\ns_m$. 
Notice that the map $\Psi$ is strictly decreasing on $\ns_m$.  
Then we denote by $J_\om=[p, q]$ the corresponding  interval in $\R$, where  $p=\Psi((b_i))$ and $q=\Psi((a_i))$.   
\begin{proof}[Proof of Theorem \ref{t41}]
Fix a  word $\om\in\Omega(\ns_m)$ of length $n(>m)$ such that the  open interval  $ O_\om:= J_\om\setminus( J_{\om 0}\cup  J_{\om 1})\ne\emptyset$.   Write
\begin{equation*}
J_\om= J_{\om 1}\cup  O_\om\cup  J_{\om 0}=:[p_1, p_2]\cup(p_2, p_3)\cup[p_3, p_4].
\end{equation*}
Notice that the   map $\Psi$ is   strictly decreasing.  
By Lemma \ref{l23} it follows that  
\begin{equation}\label{42}
\begin{split}
&\Psi(\om(1^{m-1}0)^\f)\le p_1\le \Psi(\om 1(01^{m-1})^\f),\quad p_2=\Psi(\om 1(0^{m-1}1)^\f);\\
&\Psi(\om 0(10^{m-1})^\f)\le p_4\le \Psi(\om(0^{m-1}1)^\f),\quad p_3=\Psi(\om 0(1^{m-1}0)^\f).
\end{split}
\end{equation}
By the thickness as described in Lemmas \ref{l31}, in order to prove Theorem \ref{t41} (i) it suffices to prove {the inequalities}
\begin{equation}\label{43}
p_3-p_2\le p_2-p_1\qtq{and} p_3-p_2\le p_4-p_3
\end{equation}
for some large integer $m$. 

By \eqref{42} it follows that 
\begin{align*}
p_2-p_1&\ge \Psi(\om 1(0^{m-1}1)^\f)-\Psi(\om 1(01^{m-1})^\f)\\
&=\frac{1}{(\om 1(0^{m-1}1)^\f)_2}-\frac{1}{(\om 1(01^{m-1})^\f)_2}\ge\frac{(0^{n+2}10^\f)_2}{\big((\om 110^\f)_2\big)^2},
\end{align*}
\begin{align*}
p_4-p_3&\ge \Psi(\om0(10^{m-1})^\f)-\Psi(\om 0(1^{m-1}0)^\f)\\
&=\frac{1}{(\om 0(10^{m-1})^\f)_2}-\frac{1}{(\om 0(1^{m-1}0)^\f)_2}\ge \frac{(0^{n+2}10^\f)_2}{\big((\om 110^\f)_2\big)^2}
\end{align*}
and 
\begin{align*}
p_3-p_2&=\Psi(\om 0(1^{m-1}0)^\f)-\Psi(\om 1(0^{m-1}1)^\f)\\
&=\frac{1}{(\om 0(1^{m-1}0)^\f)_2}-\frac{1}{(\om 1(0^{m-1}1)^\f)_2}\le \;\frac{(0^{n+m}30^\f)_2}{\big((\om010^\f)_2\big)^2}.
\end{align*}
Take $m_0\ge 3$ such that 
\begin{equation}\label{44}
\frac{(0^{n+m}30^\f)_2}{(0^{n+2}10^\f)_2}<\frac{1}{2}\left(\frac{(\om010^\f)_2}{(\om 110^\f)_2}\right)^2
\end{equation}
for all $m\ge m_0$.
{Here the existence of $m_0$ follows from  that}  the left term of \eqref{44} tends to zero as $m\ra\f$, while the right term is a positive constant independent of $m$. 
Then \eqref{44} and the estimates of $p_2-p_1, p_4-p_3, p_3-p_2$ imply \eqref{43} for all $m\ge m_0$:
\begin{equation*} 
p_3-p_2\le\frac{(0^{n+m}30^\f)_2}{\big((\om010^\f)_2\big)^2}<\frac{(0^{n+2}10^\f)_2}{\big((\om 110^\f)_2\big)^2}\le\min\set{ p_2-p_1, p_4-p_3}.
\end{equation*}
Applying Lemma \ref{l31} we conclude that $\n_m+\la\n_m$ contains an interval for all $\la\ne 0$ and any $m\ge m_0$. 

Next, since $1\le p_1<p_2<p_3\le 2$,  we also infer from \eqref{44} and the estimates of $p_2-p_1, p_4-p_3, p_3-p_2$ for all $m\ge m_0$ the relations
\begin{align*}
\frac{p_3}{p_2}\le 1+\frac{(0^{n+m}30^\f)_2}{p_2\big((\om010^\f)_2\big)^2}
&<1+\frac{(0^{n+2}10^\f)_2}{p_1\big((\om 110^\f)_2\big)^2}\le \frac{p_2}{p_1}
\intertext{and}
\frac{p_3}{p_2}\le   1+\frac{(0^{n+m}30^\f)_2}{p_2\big((\om010^\f)_2\big)^2}
&<1+\frac{(0^{n+2}10^\f)_2}{p_3\big((\om 110^\f)_2\big)^2}\le\frac{p_4}{p_3}.
\end{align*}
{Applying Lemma \ref{l35} we conclude that the algebraic product $\n_m\cdot\n_m^\la $  contains an interval for all $\la\ne 0$ and any $m\ge m_0$. }

Since $\n_m\subset\n$ for all $m\ge 3$, this completes the proof.
\end{proof}

\section{Final remarks}
{The method used in the proofs of Theorems \ref{t11} and \ref{t41} can also be applied to many other Cantor sets that come up in dynamics. In this section we continue the investigation of the algebraic sum and product of $\u(x)$ for $x=1$.}
Recall  that $\u(1)$ is the set of univoque bases $q\in(1,2]$ such that $1$ has a unique $q$-expansion.  
As it is customory, let us simply write $\u$ instead of $\u(1)$.

Since both $\u +\u$ and  $\u \cdot \u$ contain an interval by Theorem \ref{t11}, it is natural to ask whether $\u +\u$ and $\u \cdot \u$ themselves are intervals.  
The answer is negative:

\begin{proposition}
\label{p51}
Neither $\u +\u$, nor $\u \cdot \u$ is an interval.
The same conclusion holds if we replace $\u$ by its topological closure $\overline{\u}$.
\end{proposition}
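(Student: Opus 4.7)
The plan is to produce an explicit open subinterval of $[2q_{KL},4]$ (respectively $[q_{KL}^2,4]$) disjoint from $\overline{\u}+\overline{\u}$ (respectively $\overline{\u}\cdot\overline{\u}$). Since $\u+\u\subseteq\overline{\u}+\overline{\u}$ and $\u\cdot\u\subseteq\overline{\u}\cdot\overline{\u}$, any such gap simultaneously shows that none of the four sets is an interval.

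My first step would be to exploit the Cantor-set structure of $\overline{\u}\subseteq[q_{KL},2]$: the complement $[q_{KL},2]\setminus\overline{\u}$ is a countable disjoint union of open intervals, the \emph{gaps} of $\overline{\u}$. Picking one such gap $(a,b)$ with $q_{KL}\le a<b\le 2$, let $A:=\overline{\u}\cap[q_{KL},a]$ and $B:=\overline{\u}\cap[b,2]$, so that $\overline{\u}=A\sqcup B$. Then
\[
\overline{\u}+\overline{\u}=(A+A)\cup(A+B)\cup(B+B)\subseteq[2q_{KL},2a]\cup[q_{KL}+b,a+2]\cup[2b,4].
\]
If the chosen gap is wider than the left piece $[q_{KL},a]$, i.e.\ $2a<q_{KL}+b$, then $(2a,q_{KL}+b)$ is an open subinterval of $[2q_{KL},4]$ disjoint from $\overline{\u}+\overline{\u}$; if it is wider than the right piece $[b,2]$, i.e.\ $a+2<2b$, then $(a+2,2b)$ is such a gap.

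The identical splitting in the multiplicative setting yields
\[
\overline{\u}\cdot\overline{\u}\subseteq[q_{KL}^2,a^2]\cup[q_{KL}b,2a]\cup[b^2,4],
\]
so it suffices to verify $a^2<q_{KL}b$ or $2a<b^2$. Alternatively, as in Section~\ref{s3}, the bi-Lipschitz map $\log\colon[q_{KL},2]\to[\log q_{KL},\log 2]$ reduces the product case to the sum case applied to $\log\overline{\u}$.

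The main obstacle is to locate a concrete gap $(a,b)$ of $\overline{\u}$ satisfying one of the four inequalities above. I would obtain this from the symbolic admissibility characterization of $\Phi_1(\overline{\u})$ underlying Section~\ref{s2}: endpoints of gaps correspond to admissible sequences of distinguished eventually-periodic form, which can be read off combinatorially from forbidden patterns in the coding. Once a suitable pair of endpoints is produced, a direct numerical comparison against $q_{KL}\approx 1.78723$ verifies the required arithmetic inequality, and the gap thus obtained in $\overline{\u}+\overline{\u}$ (or $\overline{\u}\cdot\overline{\u}$) descends to the same gap in $\u+\u$ (or $\u\cdot\u$), completing the proof.
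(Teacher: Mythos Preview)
Your strategy is exactly the paper's: split $\overline{\u}$ at a gap $(a,b)$ and observe that $\overline{\u}+\overline{\u}\subseteq[2q_{KL},2a]\cup[q_{KL}+b,a+2]\cup[2b,4]$, so that $b-a>a-q_{KL}$ forces a hole; likewise $b/a>a/q_{KL}$ for the product. Your inequality $2a<q_{KL}+b$ is precisely the paper's condition $q_R-q_L>q_L-q_{KL}$ (Lemma~\ref{l52}).

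The genuine gap in your proposal is that you stop before the one step that actually requires work: producing a specific component $(a,b)$ of $[q_{KL},2]\setminus\overline{\u}$ for which the inequality holds, and verifying it. Nothing in the general Cantor-set structure guarantees that some gap is wider than the distance from its left endpoint to $q_{KL}$; this has to be checked. The paper does this concretely: the admissible word $a_1\cdots a_6=110100$ satisfies \eqref{51}, and the associated component has $\Phi_1(q_L)=(110100)^\infty$, $\Phi_1(q_R)=110101\,001011\,001010\,110101\cdots$, giving $q_L\approx 1.78854$, $q_R\approx 1.79656$. Then $q_R-q_L>0.008$ while $q_L-q_{KL}\approx 0.00131$, so $q_R-q_L>q_L-q_{KL}$; and $q_R/q_L\approx 1.00448>1.00073\approx q_L/q_{KL}$ handles the product. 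Without this explicit computation (or an equivalent one), your argument is only a reduction, not a proof.
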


Before proving Proposition \ref{p51} we recall some results from {\cite{DeVriesKomornik2009, deVries-Komornik-Loreti-16, Kom-Lor-1998, Komornik_Loreti_2007}}   on the topological properties  of $\u$.
First, $\overline{\u}$ is a Cantor set and  $q_{KL}\approx 1.78723$ is its smallest element.
Next, we have
\begin{equation*}
\overline{\u}=[q_{KL}, 2]\setminus\bigcup(q_L, q_R),
\end{equation*}
where on the right-hand side we have a union {of} countably many pairwise disjoint open intervals: the connected components of $[q_{KL},2]\setminus\overline{\u}$.

Furthermore, for each of these intervals $(q_L, q_R)$ there exists a word $a_1\cdots a_m$ with $a_m=0$, satisfying the lexicographic inequalities
\begin{equation}\label{51}
(\overline{a_1\cdots a_m})^\f \prec \si^i((a_{1}\cdots a_m)^\f)\lle (a_1\cdots a_m)^\f \qtq{for all}i\ge 0
\end{equation}
and the equalities
\begin{equation}\label{52}
\Phi_1(q_L)=(a_1\cdots a_m)^\f\qtq{and} \Phi_1(q_R)=a_1\cdots a_m^+\overline{a_1\cdots a_m}\overline{a_1\cdots a_m^+} a_1\cdots a_m^+\cdots.
\end{equation}
Here $\si$ denotes the usual {left-}shift operator, and we use the notations 
\begin{equation*}
\overline{a_1\cdots a_m}:=(1-a_1)\cdots (1-a_m),\quad a_1\cdots a_m^+:=a_1\cdots a_{m-1}(a_m+1).
\end{equation*}
We recall that the left endpoints $q_L$ are algebraic integers, while the right endpoints $q_R$, called \emph{de Vries-Komornik numbers} in \cite{Kong_Li_2015}, are transcendental and their  expansions  $\Phi_1(q_R)$  are Thue-Morse type sequences. 

We also need an elementary lemma:

\begin{lemma}\label{l52}
Let  $A$ be a non-empty set of real numbers, and set 
\begin{equation*}
a:=\inf A,\quad b:=\sup A.
\end{equation*} 
If there exists a non-empty subinterval $(c,d)$ of $(a,b)$ such that 
\begin{equation*}
A\cap (c,d)=\emptyset\qtq{and}d-c>c-a,
\end{equation*}
then $A+A$ is not an interval.
\end{lemma}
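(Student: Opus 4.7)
The plan is to exhibit an explicit gap in $A+A$. Since $A\subseteq[a,b]$ and $A\cap(c,d)=\emptyset$, we have $A\subseteq[a,c]\cup[d,b]$, and summing two elements of this union yields the inclusion
\begin{equation*}
A+A\;\subseteq\;[2a,2c]\cup[a+d,c+b]\cup[2d,2b].
\end{equation*}
The hypothesis $d-c>c-a$ rewrites as $a+d>2c$, so the open interval $(2c,a+d)$ is non-empty and, by the inclusion above, entirely disjoint from $A+A$.

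To conclude that $A+A$ is not an interval, it remains to find elements of $A+A$ on each side of this gap. On the left, any pair $p,q\in A$ close to $a=\inf A$ produces a sum $p+q$ close to $2a<2c$. On the right, since $b=\sup A\ge d$ and $A$ misses $(c,d)$, the supremum $b$ is approached from within $[d,b]$; in particular $A\cap[d,b]$ is non-empty, and doubling any $q$ in it gives $2q\ge 2d>a+d$ because $d>a$. Thus $A+A$ contains points strictly below $2c$ and points strictly above $a+d$ while missing the entire interval $(2c,a+d)$, so it is disconnected and hence not an interval.

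The argument is entirely elementary and no step should present a real obstacle; the only content worth emphasising is that the asymmetric inequality $d-c>c-a$ is exactly what separates the left block $[2a,2c]$ from the middle block $[a+d,c+b]$. Without it, the inclusion above no longer forces any gap, so this asymmetry is the indispensable ingredient of the lemma.
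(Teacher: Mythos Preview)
Your proof is correct and follows essentially the same strategy as the paper's: both identify the gap $(2c,a+d)$ in $A+A$ (the paper via a direct case split on whether both summands are $\le c$, you via the explicit decomposition $A\subseteq[a,c]\cup[d,b]$) and then exhibit points of $A+A$ on either side. The paper is slightly terser on the last step, simply noting that $A+A$ meets neighborhoods of $2a$ and $2b$; your more explicit argument that $A\cap[d,b]\ne\emptyset$ is fine, though the strict inequality ``$2a<2c$'' should be ``$\le$'' to cover the borderline case $a=c$.
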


\begin{proof}
Since $A+A$ meets both a neighborhood of both $2a$ and $2b$ by the definition of the infimum and supremum, it suffices to show that it does not meet the non-empty subinterval $(2c,a+d)$.

Let $x,y\in A$.
If $x\le c$ and $y\le c$, then $x+y\le 2c$.
Otherwise at least one of them is at least $d$. Since the other one is at least $a$, then $x+y\ge a+d$.
\end{proof}

\begin{proof}[Proof of Proposition \ref{p51}]
In order to prove that $\u +\u$ is not an interval, by the preceding lemma it suffices to find a connected component $(q_L,q_R)$ of $[q_{KL},2]\setminus\overline{\u}$ satisfying 
\begin{equation}\label{53}
q_R-q_L>q_L-q_{KL}.
\end{equation}

We claim that the interval $(q_L,q_R)$ associated with the word $a_1\cdots a_6=110100$ satisfies this inequality.

This word defines an interval $(q_L,q_R)$ indeed, because {it} satisfies the inequalities in  \eqref{51}:
\begin{equation*}
(001011)^\f\prec\si^i((110100)^\f)\lle (110100)^\f\qtq{for all}i\ge 0. 
\end{equation*}
In view of \eqref{52} the endpoints of $(q_L, q_R)$ satisfy the relations
\begin{equation*}
\Phi_1(q_L)=(110100)^\f\qtq{and}\Phi_1(q_R)=110101\,001011\;001010\,110101\;\cdots.
\end{equation*}
By a numerical calculation we have $q_L\approx 1.78854$ and $q_R\approx1.79656$.
Hence
\begin{equation*}
q_R-q_L>1.79654-1.78854=0.008
\end{equation*}
and
\begin{equation*}
q_L-q_{KL}\approx 1.78854-1.78723=0.00131,
\end{equation*}
so that the inequality \eqref{53} is satisfied.
The above proof remains valid for $\overline{\u}+\overline{\u}$ instead of $\u +\u$.

Next we consider the product $\u\cdot\u$.
Since it is homeomorphic to 
{\begin{equation*}
\ln\u+\ln\u=\set{\ln p+\ln q: p, q\in\u},
\end{equation*}}
it suffices to find a connected component $(q_L,q_R)$ of $[q_{KL},2]\setminus\overline{\u}$ satisfying 
\begin{equation}\label{54}
\ln q_R-\ln q_L>\ln q_L-\ln q_{KL},\qtq{i.e.,}\frac{q_R}{q_L}>\frac{q_L}{q_{KL}}.
\end{equation}
This is satisfied with the same interval $(q_L, q_R)\approx(1.78854, 1.79656)$ as in the first part of the proof because 
\begin{equation*}
\frac{q_R}{q_L}\approx 1.00448>  1.00073 \approx \frac{q_L}{q_{KL}}
\end{equation*}
by a numerical computation.
The  proof remains valid for $\overline{\u}\cdot\overline{\u}$ instead of $\u\cdot\u$.
\end{proof}

We end our paper with the following

\begin{conjecture}\label{conjecture53}
Both the algebraic {difference} $\u -\u$  and {quotient} ${\u}\cdot{\u}^{-1}$  are intervals. The same conclusion holds if we replace $\u$ by its topological closure $\overline{\u}$.
\end{conjecture}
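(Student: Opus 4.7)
The natural strategy is to follow the template of Theorem \ref{t11}. Given $t\in[q_{KL}-2,2-q_{KL}]$, it would suffice to exhibit Cantor subsets $C_1, C_2\subseteq\overline{\u}$ with $\tau(C_1)\tau(C_2)\ge 1$ such that $C_1$ and $C_2+t$ are linked inside a common interval (i.e.\ neither lies in a gap of the other); Newhouse's gap lemma \cite{Newhouse-1979} would then deliver $C_1\cap(C_2+t)\ne\emptyset$, whence $t\in\overline{\u}-\overline{\u}$. The quotient statement would follow by taking logarithms, exactly as in Section \ref{s3}, so I focus on the difference.

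The main obstacle, in contrast with Theorem \ref{t11}, is that the ambient set $\overline{\u}$ does \emph{not} itself have thickness at least $1$: the same numerical evidence used in the proof of Proposition \ref{p51} shows that the gap of $\overline{\u}$ associated with the word $110100$ is strictly longer than its own left bridge (which is contained in $[q_{KL}, q_L]$), so $\tau(\overline{\u})<1$. One therefore cannot apply the gap lemma to the pair $C_1=C_2=\overline{\u}$ directly. My plan is instead to attach to each sufficiently long admissible prefix $w$ a thick \emph{local} Cantor subset $\u_w\subseteq\u$ by essentially the same inductive scheme as in Lemma \ref{l21}, but with $w$ playing the role of $x_1\cdots x_{M+N_j}$ and with the tails subject to the usual no-$0^{N_j}$ and no-$1^{N_j}$ block constraints. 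An analogue of Lemma \ref{l34} should then give $\tau(\u_w)>1$ uniformly in $w$ provided $|w|$ is large enough. For a given $t$, one would select prefixes $w_1, w_2$ so that the coding intervals of $\u_{w_1}$ and $\u_{w_2}$ lie roughly at distance $t$, making $\u_{w_1}$ and $\u_{w_2}+t$ linked, and then invoke Newhouse.

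The passage from $\overline{\u}-\overline{\u}$ to $\u-\u$ uses that $\overline{\u}\setminus\u$ is countable (being contained in the set of endpoints of the countably many gaps), while each non-empty intersection produced by Newhouse is, by Astels' refinement, itself a Cantor set and hence uncountable; removing countably many bad pairs $(p, q)$ still leaves a pair in $\u\times\u$ with $p-q=t$. The hardest step will be the covering argument: certifying that for \emph{every} $t\in[q_{KL}-2,2-q_{KL}]$ a suitable pair $(w_1, w_2)$ of admissible prefixes exists requires a uniform quantitative description of the positions and lengths of the coding intervals $I_w$, together with a combinatorial lemma asserting that these intervals are dense and regular enough to produce overlapping differences that cover the whole target range. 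A secondary subtlety arises near the extremes, particularly near $-(2-q_{KL})$ and $2-q_{KL}$ where one is forced to work close to $q_{KL}$, a point at which the gaps of $\overline{\u}$ accumulate and the local thickness estimates degenerate; this regime will likely require a separate ad hoc treatment.
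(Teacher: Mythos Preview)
This statement is presented in the paper as an open \emph{conjecture}; the paper offers no proof, so there is nothing to compare your proposal against. What you have written is not a proof but a programme, and you yourself flag its two main gaps.

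As a strategy it is reasonable in outline, and the reduction from $\overline{\u}$ to $\u$ via countability of $\overline{\u}\setminus\u$ is sound. But the heart of the matter---the covering argument---is not merely a technicality to be filled in later. The paper's construction (Lemma \ref{l21} for $x=1$) produces thick Cantor subsets only near $q=2$, all beginning with long blocks $1^{N_j}$; it gives no mechanism for building thick local Cantor subsets of $\u$ around an \emph{arbitrary} point, and in particular none near $q_{KL}$. To realise a difference $t$ close to $\pm(2-q_{KL})$ you must pair a subset near $2$ with one near $q_{KL}$, and near $q_{KL}$ the gap structure of $\overline{\u}$ is governed by the Thue--Morse substitution: the gap-to-bridge ratios there do not obviously stay bounded in the way your uniform-thickness claim would require. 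Your ``separate ad hoc treatment'' for this regime is precisely where the conjecture lives; without it, the proposal does not advance beyond the heuristic level at which the paper already leaves the question.

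A smaller point: Newhouse's gap lemma yields only $C_1\cap(C_2+t)\ne\emptyset$, not that the intersection is uncountable. For your countability argument you would need the stronger conclusion (e.g.\ that the intersection is itself a Cantor set), which does follow under suitable hypotheses but should be stated and cited explicitly rather than attributed loosely to ``Astels' refinement''.
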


\section*{Acknowledgements}
{The authors thank the anonymous referee for improving the presentation of the paper.} The second author was sponsored by NWO visitor’s travel grant 040.11.599.
The third author was supported by   NSFC No.~11401516. The fourth author was supported by NSFC No.~11671147, 11571144 and Science and Technology Commission of Shanghai Municipality (STCSM)  No.~13dz2260400.

\section*{References}


\end{document}